\newtheorem{theorem}{Theorem}
\newtheorem{lemma}{Lemma}
\theoremstyle{remark}
\newtheorem{remark}{Remark}
\newtheorem{example}{Example}
\newcommand{\diag}{\mathrm{diag}}
\newcommand{\ri}{\mathrm{i}}
\title{Non-Existence of Periodic Orbits for Forced-Damped Potential Systems in Bounded Domains}
\author{Florian Kogelbauer}
\begin{document}
	
\maketitle

\begin{abstract}
We prove $L^r$-estimates on periodic solutions of periodically-forced, linearly-damped mechanical systems with polynomially-bounded potentials. The estimates are applied to obtain a non-existence result of periodic solutions in bounded domains, depending on an upper bound on the gradient of the potential. The results are illustrated on examples.
\end{abstract}

\section{Introduction}

Periodic motions are of fundamental importance in the study of mechanical systems. Existence or non-existence of periodic orbit, depending on the forcing and damping of the system, provide a first insight in the structure of the phase space, as periodic orbits are - beside fixed-points - the simplest closed building blocks of the overall dynamics.\\
In conservative systems, a one-parameter family of periodic orbits around a fixed-point is guaranteed to exist under mild assumptions, cf. \cite{liapounoff1907probleme}. The periodic orbits are index by energy and form an invariant, two-dimensional sub-manifold in phase space. These manifolds allow for special coordinate systems, cf. \cite{kelley1967changes, kelley1968changes}, that are related to action-angle variables, cf. \cite{arnol2013mathematical}.\\
In forced-damped systems, isolated limit cycles, i.e., periodic orbits that appear as limit sets of close-by trajectories, typically exist under mild conditions. For small damping and forcing amplitude, existence can be proved by standard perturbation arguments, such as averaging, cf. \cite{guckenheimer2002nonlinear}. These techniques prove, to some extend, also basic qualitative properties of the periodic orbit - at least the proximity to a periodic orbit of the unforced system.  For general damping magnitude and forcing amplitude, topological techniques, including variants of topological degree theory, have been applied successfully to obtain existence of periodic orbits in mechanical systems, cf. \cite{gaines1977coincidence} and \cite{2019arXiv190703605B}. As the methods are based on continuous deformation, however, qualitative properties do not immediately follow in general.\\

From an application point of view, limit cycles can restrict the overall performance of some systems, such as machine tools \cite{insperger2004updated} or in aircraft design \cite{denegri2000limit}. There exists active control techniques to  limit the influence of limit cycles, cf. \cite{ko1997nonlinear}, as well as passive means that are based on a dynamical system approach, cf. \cite{habib2015suppression}.\\
It is therefore also necessary to ask the converse question: under which conditions can we guarantee that there is \textit{no} limit cycle (or even periodic orbit) that is entirely contained in a given domain (or that there is no piece of a periodic orbit in a given domain at all)?\\
In two-dimensional, autonomous systems, the absence of periodic orbits in a given domain can be tested easily with the Bendixson--Dulac criterion, cf. \cite{bendixson1901courbes, dulac1937recherche}. In particular, a purely damped system cannot generate any periodic orbits. There exists various extensions to higher-dimensional systems, e.g. based on vector-calculus operators, cf. \cite{busenberg1993method} and \cite{demidowitsch1966verallgemeinerung}. Also, for autonomous systems, extensions of the Bendixson--Dulac criterion to higher-dimensional invariant manifolds based on first integrals exist, cf. \cite{fevckan2001generalization}. We also refer to a generalization based on index theory, cf. \cite{smith1981index}. These generalization, however, do not generally apply to forced-damped mechanical systems, thus prompting our current interest in non-existence results and estimates for these equations.\\

In the present paper, we provide conditions that guarantee that a periodic orbit of a forced-damped mechanical system cannot be entirely contained in a given domain. The main idea is to obtain an a-priori upper bound on the periodic orbit $\mathbf{x}_p$ in some $L^p$-norm and then derive an implicit a-priori lower bound as well - both in terms of the external forcing. The key feature of the presented estimates is that they do not scale neutrally in the amplitude of the forcing, thus ensuring that - for sufficiently large amplitude - \textit{any} periodic orbit will leave the given domain eventually (at least for some time). This can be interpreted as a qualitative statement of the physical intuition that large forcing amplitude implies a large amplitude of the solution. All estimates are explicit and give precise bounds. Even though $L^p$-properties of periodic solution (and, in particular, limit cycles), have been useful in the study of mechanical systems, cf. \cite{liang2016rate}, it appears that $L^p$-estates have not been applied to obtain non-existence results for periodic orbits before.\\
We show that the critical amplitude, i.e., the amplitude of the forcing, at which any (potential) periodic leaves the domain of definition, depends differently on the linear stiffness, the linear damping coefficient and the magnitude of the nonlinearity. In our analysis, we consider both nonlinearly hardening as well as nonlinearly softening potentials (the analysis can easily be reversed for linearly soft systems). Our estimates are formulated for multiples of the forcing period to account for period-doubling bifurcations for large forcing amplitude.\\
We illustrate the estimates on a nonlinearly hardening and on a nonlinearly softening version of the two-dimensional, forced-damped Duffing oscillator.  

\subsection{Notation}
Let $I\subset\mathbb{R}^d$ be a bounded interval and let $\mathbf{f}:I\to\mathbb{R}, t\mapsto\mathbf{f}(t)$. For $1\leq p<\infty$ we define the $L^p$-norm of $\mathbf{f}$ as
\begin{equation}
\|\mathbf{f}\|_{L^p(I)}=\left(\int_I|\mathbf{f}(t)|^p\, dt\right)^{\frac{1}{p}},
\end{equation}
while for $p=\infty$, we set
\begin{equation}
\|\mathbf{f}\|_{L^\infty(I)}=\sup_{t\in I}|\mathbf{f}(t)|.
\end{equation}
For any function $\mathbf{f}:[0,T]\mapsto \mathbb{R}^d$, we write\begin{equation}\label{defmean}\hat{\mathbf{f}}=\frac{1}{T}\int_0^T\mathbf{f}(t)\, dt, \end{equation}for the mean of $\mathbf{f}$ and we write $\tilde{\mathbf{f}}:[0,T]\to\mathbb{R}^n$,\begin{equation}\label{defmeanfree}\tilde{\mathbf{f}}(t)=\mathbf{f}(t)-\hat{\mathbf{f}},\end{equation}for the mean-free part of $\mathbf{f}$.\\
For a number $r\in\mathbb{R}$ with $r\geq 1$, we write
\begin{equation}
r*=\frac{r}{r-1},
\end{equation}
the dual coefficient, to facilitate the notation.
For a vector $\mathbf{x}=(x_1,x_2,...,x_n)\in\mathbb{R}^d$, a \textit{subvector of} $\mathbf{x}$ is a vector $\underline{\mathbf{x}}\in\mathbb{R}^l$, $l\leq d$, such that $\underline{\mathbf{x}}=(x_{j_{1}},...,x_{j_l})$, for some indices $1\leq j_1,...,j_l\leq d$.

\section{Forced-Damped Linear Systems}
In this section, we recall some basic solvability properties of linear forced-damped systems, based on the expansion in Fourier series. Consider the linear second-order system
\begin{equation}\label{mainlinear}
\mathbf{M}\ddot{\mathbf{x}}(t)+\mathbf{C}\dot{\mathbf{x}}(t)+\mathbf{K}\mathbf{x}(t)=\mathbf{f}(t),
\end{equation}
for the dynamic variable $t\mapsto \mathbf{x}(t)\in\mathbb{R}^n$, with a symmetric, positive-definite mass matrix $\mathbf{M}$, a symmetric, positive definite stiffness matrix $\mathbf{K}$ and a symmetric, positive definite damping matrix $\mathbf{C}$. 
 We assume a (twice) continuously-differentiable, $T$-periodic external forcing
\begin{equation}
\mathbf{f}:\mathbb{R}\to\mathbb{R}^d,\quad \mathbf{f}(t+T)=\mathbf{f}(t),
\end{equation}
for all $t\in\mathbb{R}$.\\
Assuming the existence of a twice continuously differentiable, $T$-periodic orbit $t\mapsto\mathbf{x}_p(t)$, $\mathbf{x}_p(t+T)=\mathbf{x}_p(t)$, we can expand both $\mathbf{x}_p$ and $\mathbf{f}$ in Fourier series,
\begin{equation}
\mathbf{x}_p(t)=\sum_{n\in\mathbb{Z}}\hat{\mathbf{x}}_ne^{\mathbf{i}n\Omega t},\qquad \mathbf{f}(t)=\sum_{n\in\mathbb{Z}}\hat{\mathbf{f}}_ne^{\mathbf{i}n\Omega t},
\end{equation}
for the frequency $\Omega=\frac{2\pi}{T}$ and the Fourier coefficients
\begin{equation}
\hat{\mathbf{x}}_p=\frac{1}{T}\int_0^T\mathbf{x}_p(t) e^{-\mathbf{i}n\Omega t}\, dt,\qquad \hat{\mathbf{f}}=\frac{1}{T}\int_0^T\mathbf{f}(t) e^{-\mathbf{i}n\Omega t}\, dt.
\end{equation}
Passing to the frequency domain, equation \eqref{mainlinear} reads
\begin{equation}\label{linearFourier}
\sum_{n\in\mathbb{Z}}\Big(-(\Omega n)^2\mathbf{M}+\mathbf{i}\Omega n\mathbf{C}+\mathbf{K}\Big)\hat{\mathbf{x}}_n e^{\mathbf{i}n\Omega t}=\sum_{n\in\mathbb{Z}}\hat{\mathbf{f}}_ne^{\mathbf{i}n\Omega t}.
\end{equation}
In the case of vanishing damping, i.e., $\mathbf{C}\equiv 0$, equation \eqref{linearFourier} we can be solved for $\{\hat{\mathbf{f}}_n\}_{n\in\mathbb{Z}}$ uniquely, given any right-hand side $\{\hat{\mathbf{f}}_n\}_{n\in\mathbb{Z}}$, provided that 
\begin{equation}
\det\Big(\mathbf{K}-(\Omega n)^2\mathbf{M}\Big)\neq 0,	
\end{equation}
for all $n\in\mathbb{Z}$. If there exists a resonant wave-number for the matrices $\mathbf{M}$ and $\mathbf{K}$, i.e., an integer $n_0\in\mathbb{Z}$ such that $\det\Big(\mathbf{K}-(\Omega n_0)^2\mathbf{M}\Big)= 0$, equation \eqref{linearFourier} may still be solved for $\{\hat{\mathbf{x}}_n\}_{n\in\mathbb{Z}}$, provided that $\hat{\mathbf{f}}_{n_0}\in\text{range}\Big(\mathbf{K}-(\Omega n_0)^2\mathbf{M}\Big)$. \\
In the damped-forced case, i.e., for $\mathbf{C}\not\equiv 0$, assuming that the matrices $\mathbf{M}$, $\mathbf{K}$ and $\mathbf{C}$ diagonalize with the same set of eigenvectors,
\begin{equation}
\begin{split}
\mathbf{M}&=\mathbf{Q}^T\diag(m_1,...,m_d)\mathbf{Q},\\
\mathbf{K}&=\mathbf{Q}^T\diag(k_1,...,k_d)\mathbf{Q},\\
\mathbf{C}&=\mathbf{Q}^T\diag(c_1,...,c_d)\mathbf{Q},
\end{split}
\end{equation}
and writing
\begin{equation}
\mathbf{y}=\mathbf{Q}\mathbf{x},\qquad \mathbf{g}=\mathbf{Q}\mathbf{f},
\end{equation} 
we have that
\begin{equation}\label{Fouriercoef}
\hat{y}_n^j=\frac{1}{-(\Omega n)^2m_j+k_j+\ri \Omega n c_j}\hat{g}_n^j,
\end{equation}
where $\hat{\mathbf{y}}_n=(\hat{y}_n^1,...,\hat{y}_n^d)$ and $\hat{\mathbf{g}}_n=(\hat{g}_n^1,...,\hat{g}_n^d)$. Clearly, if $\mathbf{C}$ is (strictly) positive-definite, expression \eqref{Fouriercoef} is always well-defined, even if there are resonances in the undamped system. If there exists a $j$  such that $c_j=0$, then a resonance can occur and a solution only exists if the particular frequency is not exited, see the undamped case.\\
The amplitude of the solution given by \eqref{Fouriercoef} scales linearly with the amplitude of the forcing $\mathbf{f}$ and a sufficiently high amplitude will cause the solution to leave a bounded domain.\\
If the domain of validity is normalized to $\mathcal{V}=\{\mathbf{x}\in\mathbb{R}^d: |\mathbf{x}|<1\}$, it follows from Jensen's inequality,
\begin{equation}
\|\mathbf{x}\|_{L^\infty(0,T)}\geq \sqrt{T}\|\mathbf{x}\|_{L^2(0,T)},
\end{equation}
that any periodic orbit leaves the domain of validity provided that
\begin{equation}\label{nonexlinear}
A>\frac{1}{\sqrt{T}}\left(\sum_{n\in\mathbb{Z}}|\mathbf{Q}^T\diag\left(\frac{1}{-(\Omega n)^2m_1+k_1+\ri \Omega n c_1},...,\frac{1}{-(\Omega n)^2m_d+k_d+\ri \Omega n c_d}\right)\mathbf{Q}\hat{\mathbf{f}}_n|^2\right)^{-\frac{1}{2}}.
\end{equation}
We have chosen the $L^2$-norm as a lower bound for the $L^\infty$-norm in \eqref{nonexlinear}, since the $L^2$-norm can be written as a time-independent sum by Parseval's formula.

\section{Estimates on periodic orbits in Forced-Damped Nonlinear Potential Systems}
In this section, we give a criterion analogous to \eqref{nonexlinear} for nonlinear systems, both with nonlinearly hardening and nonlinearly softening potential. Consider the second-order potential system of the form
\begin{equation}\label{main}
\mathbf{M}\ddot{\mathbf{x}}(t)+\mathbf{C}\dot{\mathbf{x}}(t)+\mathbf{K}\mathbf{x}(t)=-\nabla U(\mathbf{x}(t))+\mathbf{f}(t),
\end{equation}
for the dynamic variable $\mathbf{x}(t)\in\mathbb{R}^d$, with a symmetric, positive semi-definite mass matrix $\mathbf{M}$, a symmetric, positive semi-definite stiffness matrix $\mathbf{K}$ and a symmetric, positive-definite damping matrix $\mathbf{C}$. We assume an at least continuously differentiable, $T$-periodic external forcing with Lipschitz-continuous derivative,
\begin{equation}
\mathbf{f}:\mathbb{R}\to\mathbb{R}^d,\quad \mathbf{f}(t+T)=\mathbf{f}(t),
\end{equation}
for all $t\in\mathbb{R}$.\\
Let $\mathcal{V}\subseteq\mathbb{R}^d$ be the domain of validity of equation \eqref{main} and assume that the potential $U:\mathcal{V}\to\mathbb{R}$ is twice continuously-differentiable. We call the potential $U$ (nonlinearly) \textit{hardening} if it satisfies the bounds
\begin{equation}\label{AssUhard}
\begin{split}
&\nabla U(\mathbf{x})\cdot\mathbf{x}\geq u_0|\mathbf{x}|^r,\\
&r>2,
\end{split}
\end{equation}
for  $u_0>0$ and all $\mathbf{x}\in \mathcal{V}$. We call the potential (nonlinearly) \textit{softening}, if it satisfies bounds of the form
\begin{equation}\label{AssUsoft}
\begin{split}
&\nabla U(\mathbf{x})\cdot\mathbf{x}\leq -u_0|\mathbf{x}|^r,\\
&r>2,
\end{split}
\end{equation}
for $u_0>0$ and all $\mathbf{x}\in \mathcal{V}$. In particular, any potential satisfying either \eqref{AssUhard} or \eqref{AssUsoft} does not contain any quadratic terms, i.e., all the linear contributions enter in \eqref{main} through $\mathbf{M}$, $\mathbf{C}$ and $\mathbf{K}$.\\

\begin{remark}
For radially-symmetric potentials $U(\mathbf{x})=u(\rho)$, with $\rho=|\mathbf{x}|$, conditions \eqref{AssUhard} and \eqref{AssUsoft} simplify to
\begin{equation}
\frac{\partial u}{\partial \rho}(\rho)\geq u_0\rho^{r-1},\quad \frac{\partial u}{\partial \rho}(\rho)\leq -u_0\rho^{r-1},
\end{equation}
for $r>2$, $u_1=0$ and all $\mathbf{x}\in \mathcal{V}$, i.e., \eqref{AssUhard} and \eqref{AssUsoft} impose polynomial growth conditions on the derivative of $u$.
\end{remark}

\begin{remark}
We did not include dependence on the spatial variable $\mathbf{x}$ in the external forcing. Assuming, however, appropriate bounds, one can prove similar results for $\mathbf{x}$-dependent forcings. Similarly, one can adopt the following estimates for time-dependent, periodic potentials. Since one would expect - at best - a quasi-periodic motion for a general time-dependence, we did not include these variations in the subsequent analysis.
\end{remark}

\begin{remark}
The following estimates also apply, if condition \eqref{AssUhard} is replaced by
\begin{equation}
\begin{split}
&\nabla U(\mathbf{x})\cdot\underline{\mathbf{x}}\geq u_0|\underline{\mathbf{x}}|^r-u_1,\\
&r>2,
\end{split}
\end{equation}
for $u_0,u_1>0$, where $\underline{\mathbf{x}}=\mathbf{P}\mathbf{x}$, for some projection matrix $\mathbf{P}$ with $\text{range}(\mathbf{P})=l<d$, provided that the matrices $\mathbf{M}$, $\mathbf{C}$ and $\mathbf{K}$ are still positive definite on $\text{range}(\mathbf{P})$. Similarly, condition \eqref{AssUsoft} can be weakened to
\begin{equation}
\begin{split}
&\nabla U(\mathbf{x})\cdot\underline{\mathbf{x}}\leq -u_0|\underline{\mathbf{x}}|^r+u_1,\\
&r>2,
\end{split}
\end{equation}
for $u_0,u_1>0$.
\end{remark}
From the definiteness of $\mathbf{M}$, $\mathbf{C}$ and $\mathbf{K}$ we deduce the bounds
\begin{equation}\label{eigMCK}
\begin{split}
&M_{min}|\mathbf{y}|\leq\mathbf{y}\cdot\mathbf{M}\mathbf{y}\leq M_{max}|\mathbf{y}|,\\
&C_{min}|\mathbf{y}|\leq\mathbf{y}\cdot\mathbf{C}\mathbf{y}\leq C_{max}|\mathbf{y}|,\\
&K_{min}|\mathbf{y}|\leq\mathbf{y}\cdot\mathbf{K}\mathbf{y}\leq K_{max}|\mathbf{y}|,
\end{split}
\end{equation}
for all $\mathbf{y}\in\mathbb{R}^d$, where $M_{min},K_{min}\geq 0$ and $C_{min}>0$.



\begin{lemma}\label{xdot}
Let $\mathbf{f}\in C^{Lip}(0,T)$ be non-constant and let $N\geq 1$ be an integer. The derivative of any $NT$-periodic, continuously differentiable solution $\mathbf{x}_p(t)$ to equation \eqref{main} can be bounded as
\begin{equation}\label{upperxdotL2}
\|\dot{\mathbf{x}}_{p}\|_{L^2(0,NT)}\leq \frac{\sqrt{N}}{C_{min}} \|\tilde{\mathbf{f}}\|_{L^2(0,T)},
\end{equation}
where $\tilde{\mathbf{f}}$ is the mean-free part of the forcing, cf. \eqref{defmeanfree}.
\end{lemma}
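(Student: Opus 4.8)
The plan is to use the classical power balance: test equation \eqref{main} with $\dot{\mathbf{x}}_p$ and integrate over the full period $[0,NT]$, so that every conservative contribution becomes the integral of an exact derivative of an $NT$-periodic function and hence drops out. Concretely, I would take the Euclidean inner product of \eqref{main} with $\dot{\mathbf{x}}_p(t)$ and integrate from $0$ to $NT$.

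For the inertial term, symmetry of $\mathbf{M}$ gives $\dot{\mathbf{x}}_p\cdot\mathbf{M}\ddot{\mathbf{x}}_p=\tfrac12\tfrac{d}{dt}\big(\dot{\mathbf{x}}_p\cdot\mathbf{M}\dot{\mathbf{x}}_p\big)$; similarly $\dot{\mathbf{x}}_p\cdot\mathbf{K}\mathbf{x}_p=\tfrac12\tfrac{d}{dt}\big(\mathbf{x}_p\cdot\mathbf{K}\mathbf{x}_p\big)$, and since $U\in C^2(\mathcal{V})$ and $\mathbf{x}_p\in C^1$, the chain rule yields $\dot{\mathbf{x}}_p\cdot\nabla U(\mathbf{x}_p)=\tfrac{d}{dt}U(\mathbf{x}_p)$. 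All three of these are derivatives of $NT$-periodic functions, so their integrals over $[0,NT]$ vanish, and what remains is the identity
\[
\int_0^{NT}\dot{\mathbf{x}}_p(t)\cdot\mathbf{C}\dot{\mathbf{x}}_p(t)\,dt=\int_0^{NT}\dot{\mathbf{x}}_p(t)\cdot\mathbf{f}(t)\,dt .
\]
Because $\mathbf{x}_p$ is $NT$-periodic, $\int_0^{NT}\dot{\mathbf{x}}_p\,dt=\mathbf{x}_p(NT)-\mathbf{x}_p(0)=\mathbf{0}$, so on the right-hand side $\mathbf{f}$ may be replaced by $\mathbf{f}-\hat{\mathbf{f}}=\tilde{\mathbf{f}}$, cf. \eqref{defmeanfree} (the mean being taken over $[0,T]$, which equals the mean over $[0,NT]$ by $T$-periodicity of $\mathbf{f}$).

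It then remains to estimate the two sides. On the left, the lower eigenvalue bound \eqref{eigMCK} for $\mathbf{C}$ gives $\int_0^{NT}\dot{\mathbf{x}}_p\cdot\mathbf{C}\dot{\mathbf{x}}_p\,dt\ge C_{min}\|\dot{\mathbf{x}}_p\|_{L^2(0,NT)}^2$. On the right, Cauchy--Schwarz in $L^2(0,NT)$ gives $\int_0^{NT}\dot{\mathbf{x}}_p\cdot\tilde{\mathbf{f}}\,dt\le\|\dot{\mathbf{x}}_p\|_{L^2(0,NT)}\,\|\tilde{\mathbf{f}}\|_{L^2(0,NT)}$, and $T$-periodicity of $\tilde{\mathbf{f}}$ yields $\|\tilde{\mathbf{f}}\|_{L^2(0,NT)}=\sqrt{N}\,\|\tilde{\mathbf{f}}\|_{L^2(0,T)}$. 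Combining these and cancelling one factor of $\|\dot{\mathbf{x}}_p\|_{L^2(0,NT)}$ (the estimate being trivial if it vanishes) produces exactly \eqref{upperxdotL2}.

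I expect no serious obstacle: the argument is an energy identity followed by Cauchy--Schwarz. The only points requiring a little care are the justification of the exact-derivative rewritings (symmetry of $\mathbf{M}$ and $\mathbf{K}$, and the chain rule for $U$ along the orbit, which is where the $C^2$-hypothesis on $U$ and the $C^1$-regularity of $\mathbf{x}_p$ enter, together with the fact that the orbit stays inside $\mathcal{V}$) and the bookkeeping of the mean of $\mathbf{f}$ over $[0,NT]$ versus $[0,T]$. It is worth remarking that neither the hardening condition \eqref{AssUhard} nor the softening condition \eqref{AssUsoft} is used here, so the bound holds for any $C^2$ potential, and that the non-constancy of $\mathbf{f}$ is only needed to guarantee that the periodic orbit under consideration is not a mere equilibrium; the estimate itself holds unconditionally.
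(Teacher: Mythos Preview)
Your proof is correct and follows essentially the same route as the paper: test \eqref{main} with $\dot{\mathbf{x}}_p$, integrate over $[0,NT]$, use periodicity and the symmetry of $\mathbf{M}$, $\mathbf{K}$ (and the chain rule for $U$) to reduce to the energy identity, replace $\mathbf{f}$ by its mean-free part $\tilde{\mathbf{f}}$, and finish with the lower bound \eqref{eigMCK} on $\mathbf{C}$ together with Cauchy--Schwarz. If anything, you are more explicit than the paper about why each conservative term drops out and about the origin of the $\sqrt{N}$ factor.
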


\begin{proof}
Taking the inner product in equation \eqref{main} with $\dot{\mathbf{x}}_p$ and integrating over $[0,NT]$, we obtain
\begin{equation}\label{multintxLr}
\int_0^{NT}\mathbf{M}\ddot{\mathbf{x}}_p(t)\cdot\dot{\mathbf{x}}_p(t)+\mathbf{C}\dot{\mathbf{x}}_p(t)\cdot\dot{\mathbf{x}}_p(t)+ \mathbf{K}\mathbf{x}_p(t)\cdot\dot{\mathbf{x}}_p(t)\, dt=\int_0^{NT}-\nabla U(\mathbf{x}_p(t))\cdot\dot{\mathbf{x}}_p(t)+\mathbf{f}(t)\cdot \dot{\mathbf{x}}_p(t)\, dt.
\end{equation}
From the $NT$-periodicity of $\mathbf{x}_p$ and $\dot{\mathbf{x}}_p$, as well as from the symmetry of $\mathbf{M}$ and $\mathbf{K}$, we deduce that (\ref{multint}) is equivalent to
\begin{equation}\label{intzeroxLr}
\int_0^{NT}\mathbf{f}(t)\cdot\dot{{\mathbf{x}}}_p\, dt=\int_0^{NT}\dot{\mathbf{x}}_p(t)\cdot\mathbf{C}\dot{\mathbf{x}}_p(t)\, dt,
\end{equation}
or, equivalently,
\begin{equation}
\int_0^{NT}\tilde{\mathbf{f}}(t)\cdot\dot{{\mathbf{x}}}_p\, dt=\int_0^{NT}\dot{\mathbf{x}}_p(t)\cdot\mathbf{C}\dot{\mathbf{x}}_p(t)\, dt,
\end{equation}
for the mean-free part of the forcing \eqref{defmeanfree}. Using the lower bound \eqref{eigMCK} for $\mathbf{C}$ and applying H\"older's inequality then proves \eqref{upperxdotL2}.
\end{proof}

The right-hand side in \eqref{upperxdotL2} only depends on the mean-free part of the forcing and the minimal damping coefficient. If the damping is large, the assumed periodic orbit changes - on average - more slowly.

\begin{lemma}\label{xLr}
Let $\mathbf{f}\in C^{Lip}(0,T)$, let $N\geq 1$ be an integer and let $\mathbf{x}_p$ be a continuously differentiable, $NT$-periodic solution to equation \eqref{main}.\\
If the potential $U$ is hardening, i.e., satisfies \eqref{AssUhard}, then the estimate 
\begin{equation}\label{upperxLr}
\|\mathbf{x}_p\|_{L^r(0,NT)}\leq N^\frac{1}{r}u_0^{-\frac{1}{r-1}}\left\|\mathbf{f}-\frac{M_{max}}{C_{min}}\dot{\mathbf{f}}\right\|_{L^{r*}(0,T)}^{\frac{1}{r-1}},
\end{equation}
holds. If the potential $U$ is softening, i.e., satisfies \eqref{AssUsoft}, then the estimate 
\begin{equation}\label{upperxLrsoft}
\|\mathbf{x}_p\|_{L^r(0,NT)}\leq N^{\frac{1}{r}}y^*,
\end{equation}
holds, where $y^*$ is the unique positive root of the polynomial
\begin{equation}\label{poly}
P(y)=u_0y^{r-1}-K_{max}T^{\frac{r-2}{r}}y-\left\|\mathbf{f}\right\|_{L^{r*}(0,T)}.
\end{equation}
\end{lemma}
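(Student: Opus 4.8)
The plan is to run the same energy-type multiplier argument as in Lemma~\ref{xdot}, but now testing equation \eqref{main} against $\mathbf{x}_p$ itself rather than $\dot{\mathbf{x}}_p$. Taking the inner product of \eqref{main} with $\mathbf{x}_p(t)$ and integrating over $[0,NT]$, the term $\int_0^{NT}\mathbf{M}\ddot{\mathbf{x}}_p\cdot\mathbf{x}_p\,dt$ integrates by parts (using $NT$-periodicity) to $-\int_0^{NT}\dot{\mathbf{x}}_p\cdot\mathbf{M}\dot{\mathbf{x}}_p\,dt$, and the term $\int_0^{NT}\mathbf{C}\dot{\mathbf{x}}_p\cdot\mathbf{x}_p\,dt=\tfrac12\int_0^{NT}\frac{d}{dt}(\mathbf{x}_p\cdot\mathbf{C}\mathbf{x}_p)\,dt=0$ by periodicity and symmetry of $\mathbf{C}$. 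What remains is
\begin{equation}
\int_0^{NT}\nabla U(\mathbf{x}_p)\cdot\mathbf{x}_p\,dt+\int_0^{NT}\mathbf{K}\mathbf{x}_p\cdot\mathbf{x}_p\,dt=\int_0^{NT}\dot{\mathbf{x}}_p\cdot\mathbf{M}\dot{\mathbf{x}}_p\,dt+\int_0^{NT}\mathbf{f}\cdot\mathbf{x}_p\,dt.
\end{equation}

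For the hardening case, I would drop the nonnegative term $\int\mathbf{K}\mathbf{x}_p\cdot\mathbf{x}_p\,dt$ from the left (it only helps) and use \eqref{AssUhard} to bound $\int\nabla U(\mathbf{x}_p)\cdot\mathbf{x}_p\,dt\geq u_0\|\mathbf{x}_p\|_{L^r(0,NT)}^r$. On the right, the $\mathbf{M}$-term needs to be converted back into forcing data: integrating by parts once more, $\int_0^{NT}\dot{\mathbf{x}}_p\cdot\mathbf{M}\dot{\mathbf{x}}_p\,dt=-\int_0^{NT}\ddot{\mathbf{x}}_p\cdot\mathbf{M}\dot{\mathbf{x}}_p\cdot(\dots)$ is not quite what I want; instead the cleaner route is to note that from \eqref{intzeroxLr} (or rather its pointwise/antiderivative version) one can express $\mathbf{M}\dot{\mathbf{x}}_p$ — actually the slick move is: combine the two right-hand terms by writing $\int\dot{\mathbf{x}}_p\cdot\mathbf{M}\dot{\mathbf{x}}_p + \int\mathbf{f}\cdot\mathbf{x}_p$ and integrate the first by parts differently, or simply bound $\int\dot{\mathbf{x}}_p\cdot\mathbf{M}\dot{\mathbf{x}}_p\,dt\leq M_{max}\|\dot{\mathbf{x}}_p\|_{L^2(0,NT)}^2$ and then invoke Lemma~\ref{xdot}. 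Matching the precise constant $\frac{M_{max}}{C_{min}}$ sitting next to $\dot{\mathbf{f}}$ inside the $L^{r*}$-norm in \eqref{upperxLr} suggests the intended manipulation is: integrate $\int\mathbf{M}\ddot{\mathbf{x}}_p\cdot\mathbf{x}_p$ by parts to throw a derivative onto $\mathbf{x}_p$, then use equation \eqref{main} itself (or Lemma~\ref{xdot}) to trade $\dot{\mathbf{x}}_p$ against $\tilde{\mathbf{f}}$, producing a combined forcing term $\mathbf{f}-\frac{M_{max}}{C_{min}}\dot{\mathbf{f}}$; then apply Hölder with exponents $r$ and $r*$ to $\int(\mathbf{f}-\tfrac{M_{max}}{C_{min}}\dot{\mathbf{f}})\cdot\mathbf{x}_p$, obtaining $u_0\|\mathbf{x}_p\|_{L^r(0,NT)}^r\leq \|\mathbf{f}-\tfrac{M_{max}}{C_{min}}\dot{\mathbf{f}}\|_{L^{r*}(0,NT)}\|\mathbf{x}_p\|_{L^r(0,NT)}$, and finally divide by $\|\mathbf{x}_p\|_{L^r}$, take the $(r-1)$-th root, and convert the $NT$-norm of the periodic forcing into $N^{1/r*}$ times its $T$-norm (which together with the $u_0^{-1/(r-1)}$ produces the stated $N^{1/r}$).

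For the softening case, the sign in \eqref{AssUsoft} makes the nonlinear term $\int\nabla U(\mathbf{x}_p)\cdot\mathbf{x}_p\,dt\leq -u_0\|\mathbf{x}_p\|_{L^r}^r$ sit on the wrong side, so I move it to the right: $u_0\|\mathbf{x}_p\|_{L^r(0,NT)}^r\leq -\int\mathbf{K}\mathbf{x}_p\cdot\mathbf{x}_p\,dt+\int\dot{\mathbf{x}}_p\cdot\mathbf{M}\dot{\mathbf{x}}_p\,dt+\int\mathbf{f}\cdot\mathbf{x}_p\,dt+\int\nabla U(\mathbf{x}_p)\cdot\mathbf{x}_p\,dt+u_0\|\mathbf{x}_p\|_{L^r}^r$ — rather, cleanly: starting from the identity, $u_0\|\mathbf{x}_p\|_{L^r}^r \le \int\mathbf{K}\mathbf{x}_p\cdot\mathbf{x}_p + \int\mathbf{f}\cdot\mathbf{x}_p$ after dropping the (now helpful, nonnegative) $\mathbf{M}$-term? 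No — $\int\dot{\mathbf{x}}_p\cdot\mathbf{M}\dot{\mathbf{x}}_p$ is on the right and nonnegative, so it cannot simply be dropped; here is where I suspect one uses that $\mathbf{M}$ is only positive \emph{semi}-definite and the estimate is cleanest when the $\mathbf{M}$-contribution is handled via Lemma~\ref{xdot} again or, given that $\|\mathbf{f}\|_{L^{r*}}$ appears alone in \eqref{poly} without a $\dot{\mathbf{f}}$ correction, perhaps the softening estimate is stated for the case $M_{max}$ absorbed differently. Modulo that, bound $\int\mathbf{K}\mathbf{x}_p\cdot\mathbf{x}_p\,dt\leq K_{max}\|\mathbf{x}_p\|_{L^2(0,NT)}^2$, then use Hölder/Jensen $\|\mathbf{x}_p\|_{L^2(0,NT)}\leq (NT)^{\frac{r-2}{2r}}\|\mathbf{x}_p\|_{L^r(0,NT)}$ to get the $K_{max}T^{(r-2)/r}$ coefficient (the $N$-powers again combining to $N^{1/r}$ in the end), and Hölder on the forcing term to get $\|\mathbf{f}\|_{L^{r*}}\|\mathbf{x}_p\|_{L^r}$. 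Writing $Y=\|\mathbf{x}_p\|_{L^r(0,NT)}$ this yields $u_0Y^r\leq K_{max}(NT)^{(r-2)/r}\,N^{?}\,Y^2+\|\mathbf{f}\|_{L^{r*}(0,NT)}Y$, i.e., dividing by $Y$, $u_0Y^{r-1}-K_{max}(\cdots)Y-\|\mathbf{f}\|\leq 0$; after extracting the $N$-dependence and rescaling $Y=N^{1/r}y$ this becomes exactly $P(y)\leq 0$ with $P$ as in \eqref{poly}. Since $P$ has positive leading coefficient, $P(0)<0$, and $P$ is strictly convex on $(0,\infty)$ (as $r-1>1$), it has a unique positive root $y^*$, and $P(y)\leq 0$ forces $y\leq y^*$, giving \eqref{upperxLrsoft}.

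The main obstacle, and the step I would be most careful about, is the bookkeeping of the $\mathbf{M}\ddot{\mathbf{x}}_p$ term and the exact origin of the constant $\frac{M_{max}}{C_{min}}$ multiplying $\dot{\mathbf{f}}$: one must integrate by parts to move the time derivative off $\ddot{\mathbf{x}}_p$, invoke the $\dot{\mathbf{x}}_p$-estimate of Lemma~\ref{xdot} (which is precisely where $C_{min}$ enters) together with the operator bound $M_{max}$, and recombine the resulting two forcing contributions into the single norm $\|\mathbf{f}-\tfrac{M_{max}}{C_{min}}\dot{\mathbf{f}}\|_{L^{r*}}$ — the sign and the fact that it is $\dot{\mathbf{f}}$ (hence $\tilde{\mathbf{f}}$'s derivative, the mean dropping out) rather than $\mathbf{f}$ must be tracked exactly. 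The second delicate point is the consistent treatment of the $N$-factors: every $L^q(0,NT)$-norm of a $T$-periodic integrand equals $N^{1/q}$ times the corresponding $L^q(0,T)$-norm, and one must check that after dividing by $\|\mathbf{x}_p\|_{L^r(0,NT)}$ and taking $(r-1)$-th roots these powers collapse to the advertised $N^{1/r}$ (using $1/r* = (r-1)/r$). Everything else — Hölder, Jensen, dropping nonnegative quadratic-form terms, and the elementary convexity argument for uniqueness of $y^*$ — is routine.
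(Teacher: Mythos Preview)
Your overall strategy matches the paper exactly: test \eqref{main} against $\mathbf{x}_p$, integrate by parts, and combine with the energy identity from Lemma~\ref{xdot}. For the hardening case your sketch is correct; the precise mechanism you are groping for is that the \emph{identity} \eqref{intzeroxLr} from the proof of Lemma~\ref{xdot}, after one more integration by parts, reads $C_{min}\int_0^{NT}|\dot{\mathbf{x}}_p|^2\,dt\le\int_0^{NT}\dot{\mathbf{x}}_p\cdot\mathbf{C}\dot{\mathbf{x}}_p\,dt=-\int_0^{NT}\dot{\mathbf{f}}\cdot\mathbf{x}_p\,dt$, so that $\int_0^{NT}\dot{\mathbf{x}}_p\cdot\mathbf{M}\dot{\mathbf{x}}_p\,dt\le M_{max}\int_0^{NT}|\dot{\mathbf{x}}_p|^2\,dt\le-\tfrac{M_{max}}{C_{min}}\int_0^{NT}\dot{\mathbf{f}}\cdot\mathbf{x}_p\,dt$. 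This plugs directly into your identity to give $u_0\|\mathbf{x}_p\|_{L^r}^r\le\int_0^{NT}\bigl(\mathbf{f}-\tfrac{M_{max}}{C_{min}}\dot{\mathbf{f}}\bigr)\cdot\mathbf{x}_p\,dt$, and H\"older plus the $N$-scaling $\|\,\cdot\,\|_{L^{r*}(0,NT)}=N^{1/r*}\|\,\cdot\,\|_{L^{r*}(0,T)}$ finishes as you describe.

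The genuine gap is in the softening case, where you talk yourself out of the right move. Your identity reads $\int\nabla U(\mathbf{x}_p)\cdot\mathbf{x}_p+\int\mathbf{K}\mathbf{x}_p\cdot\mathbf{x}_p=\int\dot{\mathbf{x}}_p\cdot\mathbf{M}\dot{\mathbf{x}}_p+\int\mathbf{f}\cdot\mathbf{x}_p$. Using \eqref{AssUsoft} and the upper bound on $\mathbf{K}$ on the left gives $-u_0\|\mathbf{x}_p\|_{L^r}^r+K_{max}\|\mathbf{x}_p\|_{L^2}^2\ge\int\dot{\mathbf{x}}_p\cdot\mathbf{M}\dot{\mathbf{x}}_p+\int\mathbf{f}\cdot\mathbf{x}_p$, i.e.\ after rearranging,
\[
u_0\|\mathbf{x}_p\|_{L^r}^r\le K_{max}\|\mathbf{x}_p\|_{L^2}^2-\int_0^{NT}\dot{\mathbf{x}}_p\cdot\mathbf{M}\dot{\mathbf{x}}_p\,dt-\int_0^{NT}\mathbf{f}\cdot\mathbf{x}_p\,dt.
\]
The $\mathbf{M}$-term now carries a \emph{minus} sign, and since $\mathbf{M}$ is positive semi-definite that whole term is $\le 0$ and can simply be dropped---exactly what the paper does. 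Your claim that ``$\int\dot{\mathbf{x}}_p\cdot\mathbf{M}\dot{\mathbf{x}}_p$ is on the right and nonnegative, so it cannot simply be dropped'' is a sign error in the rearrangement; no appeal to Lemma~\ref{xdot} or any special absorption of $M_{max}$ is needed here. After discarding it, bound $-\int\mathbf{f}\cdot\mathbf{x}_p\le\|\mathbf{f}\|_{L^{r*}}\|\mathbf{x}_p\|_{L^r}$ and $\|\mathbf{x}_p\|_{L^2(0,NT)}^2\le(NT)^{(r-2)/r}\|\mathbf{x}_p\|_{L^r(0,NT)}^2$, divide by $\|\mathbf{x}_p\|_{L^r}$, and rescale $z=N^{1/r}y$; the rest of your argument (polynomial inequality, uniqueness of $y^*$) is fine.
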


\begin{proof}
	Taking the inner product in (\ref{main}) with $\mathbf{x}_p$ and integrating over $[0,NT]$ gives
	\begin{equation}\label{multint1xLr}
	\int_0^{NT}\mathbf{M}\ddot{\mathbf{x}}_p(t)\cdot\mathbf{x}_p(t)+\mathbf{C}\dot{\mathbf{x}}_p(t)\cdot\mathbf{x}_p(t)+ \mathbf{K}\mathbf{x}_p(t)\cdot\mathbf{x}_p(t)\, dt=\int_0^{NT}-\nabla U(\mathbf{x}_p(t))\cdot\mathbf{x}_p(t)+\mathbf{f}(t)\cdot \mathbf{x}_p(t)\, dt,
	\end{equation}
	which, after integration by parts, together with the symmetry of $\mathbf{M}$ and $\mathbf{C}$ as well as the $NT$-periodicity of $\mathbf{x}_p$, becomes
	\begin{equation}\label{multint2xLr}
	\begin{split}
	\int_0^{NT}\mathbf{K}\mathbf{x}_p(t)\cdot \mathbf{x}_p(t)+\nabla U(\mathbf{x}_p(t))\cdot\mathbf{x}_p(t)-\mathbf{f}(t)\cdot \mathbf{x}_p(t)\, dt&=\int_0^{NT}\dot{\mathbf{x}}_p(t)\cdot\mathbf{M}\dot{\mathbf{x}}_p(t)\, dt\\
	&\leq M_{max}\int_0^{NT}|\dot{\mathbf{x}}(t)|^2.
	\end{split}
	\end{equation}
	Invoking \eqref{upperxdotL2} to \eqref{multint2xLr}, we obtain
	\begin{equation}\label{multint3xLr}
	\int_0^{NT}\mathbf{K}\mathbf{x}_p(t)\cdot \mathbf{x}_p(t)+\nabla U(\mathbf{x}_p(t))\cdot\mathbf{x}_p(t)-\mathbf{f}(t)\cdot \mathbf{x}_p(t)\, dt\leq-\frac{M_{max}}{C_{min}}\int_0^{NT}\dot{\mathbf{f}}(t)\cdot{\mathbf{x}}_p\, dt.
	\end{equation}
	Assuming the lower bound \eqref{AssUhard}, it follows from the positive-definiteness of $\mathbf{K}$ and from equation \eqref{multint2xLr} together with H\"older's inequality that
	\begin{equation}
	u_0\|\mathbf{x}_p\|_{L^r(0,NT)}^r\leq \left\|\mathbf{f}-\frac{M_{max}}{C_{min}}\dot{\mathbf{f}}\right\|_{L^{r*}(0,NT)}\|\mathbf{x}_p\|_{L^r(0,NT)},
	\end{equation}
	from which \eqref{upperxLr} immediately follows.\\
	On the other hand, assuming the upper bound \eqref{AssUsoft}, equation \eqref{multint1xLr} together with the upper bound on $\mathbf{K}$ in \eqref{eigMCK} implies, after an integration by parts, that
	\begin{equation}\label{multint4Lxr}
	\int_0^{NT}-\mathbf{M}\dot{\mathbf{x}}_p(t)\cdot\dot{\mathbf{x}}_p(t)-u_0|\mathbf{x}_p(t)|^r+K_{max}|\mathbf{x}_p(t)|^2\, dt\geq\int_0^{NT}\mathbf{f}(t)\cdot\mathbf{x}_p(t)\, dt.
	\end{equation}
	Applying the Cauchy-Schwartz inequality and Jensen's inequality (which is possible thanks to the assumption $r>2$), the inequality \eqref{multint4Lxr} and the positive semi-definiteness of $\mathbf{M}$ implies that
	\begin{equation}
	u_0\|\mathbf{x}_p\|_{L^r(0,NT)}^r-K_{max}(NT)^{\frac{r-2}{r}}\|\mathbf{x}_p\|_{L^r(0,NT)}^2-\left\|\mathbf{f}\right\|_{L^{r*}(0,NT)}\|\mathbf{x}_p\|_{L^r(0,NT)}\leq 0.
	\end{equation}
	This is equivalent to 
	\begin{equation}
	\|\mathbf{x}_p\|_{L^r(0,NT)}\leq z^*,
	\end{equation}
	where $z^*$ is the unique positive root of the polynomial
	\begin{equation}
	\begin{split}
	\tilde{P}(z)&=u_0z^{r-1}-K_{max}(TN)^{\frac{r-2}{r}}z-\left\|\mathbf{f}\right\|_{L^{r*}(0,NT)}\\
	&=u_0z^{r-1}-K_{max}(TN)^{\frac{r-2}{r}}z-N^{\frac{1}{r*}}\left\|\mathbf{f}\right\|_{L^{r*}(0,T)}
	\end{split}
	\end{equation}
	(Since $\tilde{P}(0)<0$, $\tilde{P}'(0)<0$ and since $\tilde{P}'(z)$ only has one positive root, it follows that, indeed, $\tilde{P}(z)$ only has one positive root.)
	Rescaling $z=N^{\frac{1}{r}}y$ then gives  \eqref{upperxLrsoft}.
	
\end{proof}

\begin{remark}
For practical tests, the root $y^*$ can easily be calculated numerically, cf. Example \ref{Duffingsoftexpl}. For theoretical estimates building on the the estimate \eqref{upperxLr}, one might one to estimate the root $y^*$ from above by an explicit expression. In Lemma \ref{polybound} in the Appendix, we present such an estimate that approximates a polynomial of the form \eqref{poly} by a parabola with crest at the local minimum and gives an according upper bound on the unique positive root of \eqref{poly}.
\end{remark}

We note that the inequality \eqref{upperxLrsoft} is independent on the damping matrix $\mathbf{C}$.\\
The following lemma gives a stronger (in terms of the amplitude of the forcing) upper bound on the $L^2$-norm of $\dot{\mathbf{x}}$, using the estimates \eqref{upperxLr} and \eqref{upperxLrsoft}. Indeed, if the external forcing scales as $\mathbf{f}\mapsto A\mathbf{f}$, estimate \eqref{upperxdotL2Lr} gives an upper bound that scales as $A^{\frac{r}{2(r-1)}}$ instead of $A$ in \eqref{upperxdotL2}, remembering that $r\geq 2$ by assumption. Since $\frac{r}{2(r-1)}\geq 1$ by assumption, the amplitude of the solution can be bounded by the amplitude of the forcing through a nonlinear function, as compared to a linear relation for the case of $U\equiv 0$, cf. \eqref{Fouriercoef}.\\
The upper bounds \eqref{upperxLr} and \eqref{upperxLrsoft} are genuinely nonlinear, i.e., the only hold true for a non-vanishing nonlinear potential $U$. They become stronger for larger $u_0$ and become singular for $u_0\to 0$, i.e., in the linear regime (the root $y^*$ in \eqref{upperxLrsoft} does not even exist for $u_0=0$).

The following lemma gives an improved upper bound on the $L^2$-norm of the velocity of any periodic solution, based on the $L^r$-estimates obtained in Lemma \eqref{xLr}.

\begin{lemma}\label{xdotxLr}
	Let $\mathbf{f}\in C^{Lip}(0,T)$ be non-constant and let $N\geq 1$ be an integer. It the potential is nonlinearly hardening, i.e., satisfies \eqref{AssUhard}, the derivative of any $NT$-periodic, continuously differentiable solution $\mathbf{x}_p(t)$ to equation \eqref{main} can be bounded as
	\begin{equation}\label{upperxdotL2Lr}
	\|\dot{\mathbf{x}}_{p}\|_{L^2(0,NT)}\leq u_0^{-\frac{1}{2(r-1)}}\sqrt{\frac{N}{C_{min}}} \|\dot{\mathbf{f}}\|_{L^{r*}(0,T)}^{\frac{1}{2}}\left\|\mathbf{f}-\frac{M_{max}}{C_{min}}\dot{\mathbf{f}}\right\|_{L^{r*}(0,T)}^{\frac{1}{2(r-1)}}.
	\end{equation}
	If the potential is nonlinearly softening, i.e., satisfies \eqref{AssUsoft}, the derivative of any $NT$-periodic, continuously differentiable solution $\mathbf{x}_p(t)$ to equation \eqref{main} can be bounded as
		\begin{equation}\label{upperxdotL2Lrsoft}
	\|\dot{\mathbf{x}}_{p}\|_{L^2(0,NT)}\leq\frac{N^{\frac{1}{2r*}}}{\sqrt{C_{min}}} \|\dot{\mathbf{f}}\|_{L^{r*}(0,T)}^{\frac{1}{2}}\sqrt{y^*},
	\end{equation}
	where again $y^*$ is the unique positive root of the polynomial
	\begin{equation}
	P(y)=u_0y^{r-1}-K_{max}(NT)^{\frac{r-2}{r}}y-\left\|\mathbf{f}\right\|_{L^{r*}(0,NT)}.
	\end{equation}
\end{lemma}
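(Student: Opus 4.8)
The plan is to revisit the energy identity \eqref{intzeroxLr} from the proof of Lemma \ref{xdot}, but instead of immediately bounding the right-hand side via Hölder with a constant, to exploit the fact that the work done by the forcing is now controlled by the sharper $L^r$-bound on $\mathbf{x}_p$ just obtained in Lemma \ref{xLr}. Concretely, recall that testing \eqref{main} with $\dot{\mathbf{x}}_p$ and using $NT$-periodicity together with the symmetry of $\mathbf{M}$ and $\mathbf{K}$ gives
\begin{equation}
C_{min}\|\dot{\mathbf{x}}_p\|_{L^2(0,NT)}^2 \leq \int_0^{NT}\dot{\mathbf{x}}_p(t)\cdot\mathbf{C}\dot{\mathbf{x}}_p(t)\,dt = \int_0^{NT}\tilde{\mathbf{f}}(t)\cdot\dot{\mathbf{x}}_p(t)\,dt.
\end{equation}
Now, rather than putting Hölder directly on $\tilde{\mathbf{f}}\cdot\dot{\mathbf{x}}_p$, I would integrate by parts once more to move the derivative off $\mathbf{x}_p$: since $\mathbf{x}_p$ and $\mathbf{f}$ are $NT$-periodic, $\int_0^{NT}\tilde{\mathbf{f}}\cdot\dot{\mathbf{x}}_p\,dt = -\int_0^{NT}\dot{\mathbf{f}}\cdot\mathbf{x}_p\,dt$ (the mean-free correction contributes nothing since $\dot{\tilde{\mathbf{f}}}=\dot{\mathbf{f}}$). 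Applying Hölder with exponents $r*$ and $r$ then yields
\begin{equation}
C_{min}\|\dot{\mathbf{x}}_p\|_{L^2(0,NT)}^2 \leq \|\dot{\mathbf{f}}\|_{L^{r*}(0,NT)}\,\|\mathbf{x}_p\|_{L^r(0,NT)}.
\end{equation}

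From here the two cases split exactly as in Lemma \ref{xLr}. In the hardening case I substitute the bound \eqref{upperxLr}, i.e. $\|\mathbf{x}_p\|_{L^r(0,NT)}\leq N^{1/r}u_0^{-1/(r-1)}\|\mathbf{f}-\tfrac{M_{max}}{C_{min}}\dot{\mathbf{f}}\|_{L^{r*}(0,T)}^{1/(r-1)}$, together with the scaling $\|\dot{\mathbf{f}}\|_{L^{r*}(0,NT)} = N^{1/r*}\|\dot{\mathbf{f}}\|_{L^{r*}(0,T)}$ coming from $NT$-periodicity of $\dot{\mathbf{f}}$. Collecting the powers of $N$ gives $N^{1/r*}\cdot N^{1/r} = N$ inside the square, so after taking the square root one obtains exactly the $\sqrt{N/C_{min}}$ prefactor and the stated product of $L^{r*}$-norms to the powers $1/2$ and $1/(2(r-1))$ in \eqref{upperxdotL2Lr}. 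In the softening case I instead insert \eqref{upperxLrsoft}, $\|\mathbf{x}_p\|_{L^r(0,NT)}\leq N^{1/r}y^*$, and again use $\|\dot{\mathbf{f}}\|_{L^{r*}(0,NT)}=N^{1/r*}\|\dot{\mathbf{f}}\|_{L^{r*}(0,T)}$; the exponents of $N$ combine to $N^{1/r*}\cdot N^{1/r}=N$ under the square root, but since only $y^*$ (not its $(r-1)$-th root) appears, the bookkeeping produces the prefactor $N^{1/(2r*)}/\sqrt{C_{min}}$ and the factor $\sqrt{y^*}$ in \eqref{upperxdotL2Lrsoft}. (Here one should note that the $y^*$ appearing in the lemma statement is, consistently with Lemma \ref{xLr}, defined via the polynomial written on the $(0,NT)$ interval, so the rescaling $z=N^{1/r}y$ is already folded into $\|\mathbf{x}_p\|_{L^r}\leq N^{1/r}y^*$.)

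I do not expect a genuine obstacle here; the lemma is essentially a corollary obtained by feeding the output of Lemma \ref{xLr} back into the energy identity of Lemma \ref{xdot}. The one point requiring a little care is the integration by parts that replaces $\int\tilde{\mathbf{f}}\cdot\dot{\mathbf{x}}_p$ by $-\int\dot{\mathbf{f}}\cdot\mathbf{x}_p$: one needs $\mathbf{x}_p\in C^1$ and $\mathbf{f}$ with Lipschitz (hence a.e.\ differentiable, $L^\infty$) derivative, which is exactly what the hypotheses $\mathbf{x}_p\in C^1$ and $\mathbf{f}\in C^{Lip}(0,T)$ provide, so the boundary terms vanish by $NT$-periodicity and the manipulation is legitimate. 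The only other thing to double-check is the exponent arithmetic with $N$ — that $\tfrac{1}{r}+\tfrac{1}{r*}=1$ is used to turn the product of the two $N$-powers into a single factor of $N$ under the square root — but this is routine. A final sanity check: under the rescaling $\mathbf{f}\mapsto A\mathbf{f}$ the right-hand side of \eqref{upperxdotL2Lr} scales as $A^{1/2}\cdot A^{1/(2(r-1))} = A^{r/(2(r-1))}$, matching the scaling claim made in the paragraph preceding the lemma, which confirms the powers are assembled correctly.
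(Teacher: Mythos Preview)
Your proposal is correct and follows essentially the same route as the paper: test \eqref{main} with $\dot{\mathbf{x}}_p$, use periodicity to reduce to the damping identity, integrate by parts to replace $\int\mathbf{f}\cdot\dot{\mathbf{x}}_p$ by $-\int\dot{\mathbf{f}}\cdot\mathbf{x}_p$, apply H\"older with exponents $(r*,r)$, and then feed in the $L^r$-bounds from Lemma~\ref{xLr}. The only wrinkle is notational: the $y^*$ in the present lemma is defined via the polynomial on $(0,NT)$, i.e.\ it is the $z^*$ from the proof of Lemma~\ref{xLr}, so the bound reads $\|\mathbf{x}_p\|_{L^r(0,NT)}\leq y^*$ directly (without the extra $N^{1/r}$), and only the factor $N^{1/r*}$ from $\|\dot{\mathbf{f}}\|_{L^{r*}(0,NT)}$ survives under the square root---which is exactly what your parenthetical remark is getting at.
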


\begin{proof}
	Taking the inner product in equation \eqref{main} with $\dot{\mathbf{x}}_p$ and integrating over $[0,NT]$, we obtain
	\begin{equation}\label{multintxdot}
	\int_0^{NT}\mathbf{M}\ddot{\mathbf{x}}_p(t)\cdot\dot{\mathbf{x}}_p(t)+\mathbf{C}\dot{\mathbf{x}}_p(t)\cdot\dot{\mathbf{x}}_p(t)+ \mathbf{K}\mathbf{x}_p(t)\cdot\dot{\mathbf{x}}_p(t)\, dt=\int_0^{NT}-\nabla U(\mathbf{x}_p(t))\cdot\dot{\mathbf{x}}_p(t)+\mathbf{f}(t)\cdot \dot{\mathbf{x}}_p(t)\, dt.
	\end{equation}
	From the $NT$-periodicity of $\mathbf{x}_p$ and $\dot{\mathbf{x}}_p$, as well as from the symmetry of $\mathbf{M}$ and $\mathbf{K}$, we deduce that (\ref{multint}) is equivalent to
	\begin{equation}\label{intzeroxdot}
	\int_0^{NT}\mathbf{f}(t)\cdot\dot{{\mathbf{x}}}_p\, dt=\int_0^{NT}\dot{\mathbf{x}}_p(t)\cdot\mathbf{C}\dot{\mathbf{x}}_p(t)\, dt,
	\end{equation}
	or, equivalently,
	\begin{equation}
	-\int_0^{NT}\dot{\mathbf{f}}(t)\cdot{\mathbf{x}}_p\, dt=\int_0^{NT}\dot{\mathbf{x}}_p(t)\cdot\mathbf{C}\dot{\mathbf{x}}_p(t)\, dt.
	\end{equation}
	Using the lower bound \eqref{eigMCK} for $\mathbf{C}$ and applying H\"older's inequality implies that
	\begin{equation}
	C_{min}\|\dot{\mathbf{x}}_p\|_{L^2(0,NT)}^2\leq \|\dot{\mathbf{f}}\|_{L^{r*}(0,NT)}\|\mathbf{x}_p\|_{L^r(0,NT)}.
	\end{equation}
	The claims now follow from Lemma \eqref{xLr} together with the scaling in $N$ of the right-hand side.
	\end{proof}

\section{Non-existence of periodic orbits in bounded domains}
In this section, we present some conditions on the non-existence of periodic orbits, assuming that the gradient of the potential satisfies a certain upper bound in the domain of validity.\\
Due to the assumed polynomial bounds on the potential and thanks to the estimates \eqref{upperxLr} and \eqref{upperxLr}, these conditions do not scale neutrally in the amplitude of the forcing. This implies that, for an external forcing with sufficiently large amplitude, any (potential) periodic orbit will necessarily leave the domain of validity, or, to phrase it differently: We obtain a lower bound on the maximum of a periodic orbit in terms of the amplitude of the external forcing.\\

\begin{theorem}
	Assume that the potential $U$ satisfies the lower bound \eqref{AssUhard} and assume that the gradient of the potential $U$ can be bounded as
	\begin{equation}\label{boundgradU}
	|\nabla U(\mathbf{x})|\leq U_0,
	\end{equation}
	for some $U_0>0$, all $\mathbf{x}\in \mathcal{V}$. If the $T$-periodic forcing $\mathbf{f}:[0,T]\to\mathbb{R}^d$ satisfies the bound
	\begin{equation}\label{nonex}
	\|\mathbf{f}\|_{L^2(0,T)}^2>U_0\|\mathbf{f}\|_{L^1(0,T)}+u_0^{-\frac{1}{r-1}}\|\mathbf{M}\ddot{\mathbf{f}}-\mathbf{C}\dot{\mathbf{f}}+\mathbf{K}\mathbf{f}\|_{L^{r*}(0,T)}\left\|\mathbf{f}-\frac{M_{max}}{C_{min}}\dot{\mathbf{f}}\right\|_{L^{r*}(0,T)}^{\frac{1}{r-1}},
	\end{equation}
	then there does not exist an $NT$-periodic orbit entirely contained in $\mathcal{V}$ for any integer $N\geq 1$.
\end{theorem}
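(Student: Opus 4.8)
The plan is to derive a contradiction from the assumed existence of an $NT$-periodic orbit $\mathbf{x}_p$ contained in $\mathcal{V}$ by producing, on the one hand, an upper bound on a suitable integral quantity (via the $L^r$-estimate of Lemma \ref{xLr}) and, on the other hand, a lower bound on the same quantity that uses the forcing in a way that does not scale neutrally in its amplitude. The natural quantity to test against is $\mathbf{f}$ itself: I would take the inner product of \eqref{main} with $\mathbf{f}(t)$ and integrate over $[0,NT]$. Integrating by parts twice in the $\mathbf{M}\ddot{\mathbf{x}}_p\cdot\mathbf{f}$ term and once in the $\mathbf{C}\dot{\mathbf{x}}_p\cdot\mathbf{f}$ term (both boundary contributions vanish by $NT$-periodicity of $\mathbf{x}_p$ and of $\mathbf{f}$, the latter being $T$-periodic hence $NT$-periodic), and using symmetry of $\mathbf{M}$, $\mathbf{C}$, $\mathbf{K}$, this converts the left-hand side into $\int_0^{NT}\mathbf{x}_p\cdot(\mathbf{M}\ddot{\mathbf{f}}-\mathbf{C}\dot{\mathbf{f}}+\mathbf{K}\mathbf{f})\,dt$. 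On the right-hand side we get $\int_0^{NT}\mathbf{f}\cdot\mathbf{f}\,dt - \int_0^{NT}\nabla U(\mathbf{x}_p)\cdot\mathbf{f}\,dt = \|\mathbf{f}\|_{L^2(0,NT)}^2 - \int_0^{NT}\nabla U(\mathbf{x}_p)\cdot\mathbf{f}\,dt$.

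Next I would estimate each surviving term. The potential term is controlled by \eqref{boundgradU}: $\left|\int_0^{NT}\nabla U(\mathbf{x}_p)\cdot\mathbf{f}\,dt\right|\le U_0\|\mathbf{f}\|_{L^1(0,NT)} = N\,U_0\|\mathbf{f}\|_{L^1(0,T)}$, where the last equality uses $T$-periodicity of $\mathbf{f}$. The left-hand side is estimated by H\"older with exponents $r$ and $r*$: $\left|\int_0^{NT}\mathbf{x}_p\cdot(\mathbf{M}\ddot{\mathbf{f}}-\mathbf{C}\dot{\mathbf{f}}+\mathbf{K}\mathbf{f})\,dt\right|\le \|\mathbf{x}_p\|_{L^r(0,NT)}\,\|\mathbf{M}\ddot{\mathbf{f}}-\mathbf{C}\dot{\mathbf{f}}+\mathbf{K}\mathbf{f}\|_{L^{r*}(0,NT)}$, and then I insert the hardening estimate \eqref{upperxLr} for $\|\mathbf{x}_p\|_{L^r(0,NT)}$ together with $\|\mathbf{M}\ddot{\mathbf{f}}-\mathbf{C}\dot{\mathbf{f}}+\mathbf{K}\mathbf{f}\|_{L^{r*}(0,NT)} = N^{1/r*}\|\mathbf{M}\ddot{\mathbf{f}}-\mathbf{C}\dot{\mathbf{f}}+\mathbf{K}\mathbf{f}\|_{L^{r*}(0,T)}$. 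Also $\|\mathbf{f}\|_{L^2(0,NT)}^2 = N\|\mathbf{f}\|_{L^2(0,T)}^2$. Collecting the factors of $N$: the $\|\mathbf{f}\|_{L^2}^2$ term carries $N^1$, the $U_0$ term carries $N^1$, and the potential-gradient/Lemma~\ref{xLr} term carries $N^{1/r}\cdot N^{1/r*} = N^{1/r + (r-1)/r} = N^1$. So every term scales like $N$, the factor $N$ divides out cleanly, and the resulting inequality is exactly
\begin{equation}
\|\mathbf{f}\|_{L^2(0,T)}^2 \le U_0\|\mathbf{f}\|_{L^1(0,T)} + u_0^{-\frac{1}{r-1}}\|\mathbf{M}\ddot{\mathbf{f}}-\mathbf{C}\dot{\mathbf{f}}+\mathbf{K}\mathbf{f}\|_{L^{r*}(0,T)}\left\|\mathbf{f}-\frac{M_{max}}{C_{min}}\dot{\mathbf{f}}\right\|_{L^{r*}(0,T)}^{\frac{1}{r-1}},
\end{equation}
which contradicts the hypothesis \eqref{nonex}. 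Hence no such orbit exists.

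I expect the main obstacle to be purely bookkeeping rather than conceptual: making sure the integration by parts on the $\mathbf{M}\ddot{\mathbf{x}}_p\cdot\mathbf{f}$ term is justified (this needs $\mathbf{f}\in C^2$ or at least $\dot{\mathbf{f}}$ Lipschitz, which is assumed, and $\mathbf{x}_p\in C^1$; one may need to argue $\mathbf{x}_p$ is in fact $C^2$ from the equation, since $\ddot{\mathbf{x}}_p = \mathbf{M}^{-1}(-\mathbf{C}\dot{\mathbf{x}}_p-\mathbf{K}\mathbf{x}_p-\nabla U(\mathbf{x}_p)+\mathbf{f})$ is continuous when $\mathbf{M}$ is invertible — though $\mathbf{M}$ is only assumed positive semi-definite here, so one might instead move all derivatives onto $\mathbf{f}$ and keep $\mathbf{x}_p$ undifferentiated, which is what the chosen test function accomplishes), and tracking the exact $N$-powers so that the dependence on $N$ cancels. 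The one genuinely substantive point is the use of Lemma~\ref{xLr}: it provides the $L^r$ control of $\mathbf{x}_p$ that is indispensable, and it is precisely the sub-linear-in-$\mathbf{f}$ exponent $\frac{1}{r-1}<1$ there that makes the right-hand side of \eqref{nonex} grow more slowly than the left-hand side $\|\mathbf{f}\|_{L^2}^2$ under amplitude scaling $\mathbf{f}\mapsto A\mathbf{f}$, so that the hypothesis is satisfiable for large $A$ — this is the "non-neutral scaling" remarked on in the text and is worth stating explicitly after the contradiction is reached.
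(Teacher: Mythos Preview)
Your proposal is correct and follows essentially the same route as the paper: test equation \eqref{main} against $\mathbf{f}$, integrate by parts to transfer derivatives onto $\mathbf{f}$, bound the potential term via \eqref{boundgradU}, apply H\"older with exponents $r,r*$ together with Lemma~\ref{xLr}, and observe that all $N$-powers cancel so that the resulting inequality contradicts \eqref{nonex}. Your extra care about the regularity needed for the integrations by parts and the explicit $N$-bookkeeping are useful clarifications the paper leaves implicit.
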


\begin{proof}
	Assume, to the contrary, that there exists an $NT$-periodic orbit $t\mapsto \mathbf{x}_p(t)$, $\mathbf{x}(t+NT)=\mathbf{x}(t)$. Multiplying equation \eqref{main} with $\mathbf{f}$ and integrating over $[0,NT]$, we obtain
	\begin{equation}
	\int_0^{NT} \mathbf{M}\ddot{\mathbf{x}}_p(t)\cdot\mathbf{f}(t)+\mathbf{C}\dot{\mathbf{x}}_p(t)\cdot\mathbf{f}(t)+\mathbf{K}\mathbf{x}_p(t)\cdot\mathbf{f}(t)\, dt=\int_0^{NT}-\nabla U(\mathbf{x}_p(t))\cdot\mathbf{f}(t)+|\mathbf{f}(t)|^2\, dt,
	\end{equation}
	which, after an integration by parts together with symmetry of $\mathbf{M}$, $\mathbf{C}$ and $\mathbf{K}$ and with assumption \eqref{boundgradU}, implies that
	\begin{equation}\label{boundgrad1}
	\int_0^{NT}|\mathbf{f}(t)|^2\, dt\leq\int_0^{NT} U_0 |\mathbf{f}(t)|+|\mathbf{M}\ddot{\mathbf{f}}(t)-\mathbf{C}\dot{\mathbf{f}}(t)+\mathbf{K}\mathbf{f}(t)||\mathbf{x}_p(t)|\, dt.
	\end{equation}
	Applying H\"older's inequality to equation \eqref{boundgrad1} and using the estimate \eqref{upperxLr} gives
	\begin{equation}\label{boundgrad2}
	\begin{split}
	\|\mathbf{f}\|_{L^2(0,NT)}^2&\leq U_0\|\mathbf{f}\|_{L^1(0,NT)}+\|\mathbf{M}\ddot{\mathbf{f}}-\mathbf{C}\dot{\mathbf{f}}+\mathbf{K}\mathbf{f}\|_{L^{r*}(0,NT)}\|\mathbf{x}_p\|_{L^r(0,NT)}\\
	&\leq U_0\|\mathbf{f}\|_{L^1(0,NT)}+u_0^{-\frac{1}{r-1}}\|\mathbf{M}\ddot{\mathbf{f}}-\mathbf{C}\dot{\mathbf{f}}+\mathbf{K}\mathbf{f}\|_{L^{r*}(0,NT)}\left\|\mathbf{f}-\frac{M_{max}}{C_{min}}\dot{\mathbf{f}}\right\|_{L^{r*}(0,NT)}^{\frac{1}{r-1}}.
	\end{split}
	\end{equation}
	Since the left-hand side and the right-hand side of \eqref{boundgrad2} both scale with $N$, we arrive at a contradiction to assumption \eqref{nonex}. This proves the claim.
\end{proof}

\begin{theorem}\label{nonexthm}
	Assume that the potential $U$ satisfies the upper bound \eqref{AssUsoft} and assume that the gradient of the potential $U$ can be bounded as
	\begin{equation}\label{boundgradUsoft}
	|\nabla U(\mathbf{x})|\leq U_0,
	\end{equation}
	for some $U_0>0$, all $\mathbf{x}\in \mathcal{V}$. If the $T$-periodic forcing $\mathbf{f}:[0,T]\to\mathbb{R}^d$ satisfies the bound
	\begin{equation}\label{nonexsoft}
	\|\mathbf{f}\|_{L^2(0,T)}^2>U_0\|\mathbf{f}\|_{L^1(0,T)}+\|\mathbf{M}\ddot{\mathbf{f}}-\mathbf{C}\dot{\mathbf{f}}+\mathbf{K}\mathbf{f}\|_{L^{r*}(0,T)}y^*,
	\end{equation}
	where $y^*$ is the unique positive root of the polynomial
	\begin{equation}
	P(y)=u_0y^{r-1}-K_{max}T^{\frac{r-2}{r}}y-\left\|\mathbf{f}\right\|_{L^{r*}(0,T)},
	\end{equation}
	then there does not exist an $NT$-periodic orbit entirely contained in $\mathcal{V}$.
\end{theorem}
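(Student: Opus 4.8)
The plan is to reproduce, almost verbatim, the proof of the preceding (hardening) theorem, the only substantive change being that the $L^r$-control of the orbit is now provided by the softening estimate \eqref{upperxLrsoft} of Lemma \ref{xLr} in place of \eqref{upperxLr}. So I would argue by contradiction: suppose that for some integer $N\ge 1$ there is a continuously differentiable, $NT$-periodic solution $t\mapsto\mathbf{x}_p(t)$ of \eqref{main} with $\mathbf{x}_p(t)\in\mathcal{V}$ for every $t$. Since the orbit remains in $\mathcal{V}$, the softening bound \eqref{AssUsoft} and the gradient bound \eqref{boundgradUsoft} both hold along it, so Lemma \ref{xLr} applies.

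First I would test equation \eqref{main} against the forcing $\mathbf{f}$ itself (rather than against $\mathbf{x}_p$ or $\dot{\mathbf{x}}_p$) and integrate over $[0,NT]$. Because $\mathbf{f}\in C^{Lip}(0,T)$ has $\ddot{\mathbf{f}}\in L^\infty$ and both $\mathbf{x}_p$ and $\dot{\mathbf{x}}_p$ are $NT$-periodic, integrating by parts (twice for the inertial term, once for the damping term) and using the symmetry of $\mathbf{M}$, $\mathbf{C}$, $\mathbf{K}$ turns the left-hand side into $\int_0^{NT}\mathbf{x}_p\cdot(\mathbf{M}\ddot{\mathbf{f}}-\mathbf{C}\dot{\mathbf{f}}+\mathbf{K}\mathbf{f})\,dt$, while the right-hand side keeps the quadratic term $\int_0^{NT}|\mathbf{f}|^2\,dt$ plus $-\int_0^{NT}\nabla U(\mathbf{x}_p)\cdot\mathbf{f}\,dt$, the latter estimated in absolute value by $U_0\|\mathbf{f}\|_{L^1(0,NT)}$ through \eqref{boundgradUsoft}. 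Rearranging gives
\[
\|\mathbf{f}\|_{L^2(0,NT)}^2\le U_0\|\mathbf{f}\|_{L^1(0,NT)}+\int_0^{NT}\bigl|\mathbf{M}\ddot{\mathbf{f}}-\mathbf{C}\dot{\mathbf{f}}+\mathbf{K}\mathbf{f}\bigr|\,|\mathbf{x}_p|\,dt .
\]
Then I would bound the last integral by H\"older's inequality with the dual exponents $r*$ and $r$ and insert \eqref{upperxLrsoft}, $\|\mathbf{x}_p\|_{L^r(0,NT)}\le N^{1/r}y^*$, to obtain
\[
\|\mathbf{f}\|_{L^2(0,NT)}^2\le U_0\|\mathbf{f}\|_{L^1(0,NT)}+N^{1/r}y^*\,\bigl\|\mathbf{M}\ddot{\mathbf{f}}-\mathbf{C}\dot{\mathbf{f}}+\mathbf{K}\mathbf{f}\bigr\|_{L^{r*}(0,NT)} .
\]

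The final step is to track the powers of $N$. By $T$-periodicity, $\|\mathbf{f}\|_{L^2(0,NT)}^2=N\|\mathbf{f}\|_{L^2(0,T)}^2$, $\|\mathbf{f}\|_{L^1(0,NT)}=N\|\mathbf{f}\|_{L^1(0,T)}$ and $\bigl\|\mathbf{M}\ddot{\mathbf{f}}-\mathbf{C}\dot{\mathbf{f}}+\mathbf{K}\mathbf{f}\bigr\|_{L^{r*}(0,NT)}=N^{1/r*}\bigl\|\mathbf{M}\ddot{\mathbf{f}}-\mathbf{C}\dot{\mathbf{f}}+\mathbf{K}\mathbf{f}\bigr\|_{L^{r*}(0,T)}$; since $\tfrac1r+\tfrac1{r*}=1$, every term is homogeneous of degree one in $N$, so dividing through by $N$ produces exactly the reverse of inequality \eqref{nonexsoft}, a contradiction that proves the theorem.

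I expect the only point requiring genuine care — as opposed to the routine integration-by-parts-and-H\"older bookkeeping — to be the role of $y^*$: one must check that the root appearing in \eqref{upperxLrsoft} is precisely the root of the polynomial written in the theorem, namely the one with $T^{(r-2)/r}$ and $\|\mathbf{f}\|_{L^{r*}(0,T)}$ rather than the $NT$-versions. This is exactly what the rescaling $z=N^{1/r}y$ in the proof of Lemma \ref{xLr} secures, so the $N$-scaling closes consistently and no hidden $N$-dependence is smuggled into $y^*$. Everything else is identical to the hardening case.
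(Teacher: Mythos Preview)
Your proposal is correct and follows essentially the same argument as the paper's proof: multiply \eqref{main} by $\mathbf{f}$, integrate over $[0,NT]$, integrate by parts, apply \eqref{boundgradUsoft} and H\"older, insert \eqref{upperxLrsoft}, and observe that both sides scale linearly in $N$ to reach a contradiction with \eqref{nonexsoft}. Your explicit check that $y^*$ is $N$-independent (via the rescaling $z=N^{1/r}y$ from Lemma~\ref{xLr}) is a welcome clarification that the paper leaves implicit.
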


\begin{proof}
	Assume, to the contrary, that there exists an $NT$-periodic orbit $t\mapsto \mathbf{x}_p(t)$, $\mathbf{x}(t+NT)=\mathbf{x}(t)$. Multiplying equation \eqref{main} with $\mathbf{f}$ and integrating over $[0,NT]$, we obtain
	\begin{equation}
	\int_0^{NT} \mathbf{M}\ddot{\mathbf{x}}_p(t)\cdot\mathbf{f}(t)+\mathbf{C}\dot{\mathbf{x}}_p(t)\cdot\mathbf{f}(t)+\mathbf{K}\mathbf{x}_p(t)\cdot\mathbf{f}(t)\, dt=\int_0^{NT}-\nabla U(\mathbf{x}_p(t))\cdot\mathbf{f}(t)+|\mathbf{f}(t)|^2\, dt,
	\end{equation}
	which, after an integration by parts together with symmetry of $\mathbf{M}$, $\mathbf{C}$ and $\mathbf{K}$ and with assumption \eqref{boundgradUsoft}, implies that
	\begin{equation}\label{boundgrad1soft}
	\int_0^{NT}|\mathbf{f}(t)|^2\, dt\leq\int_0^{NT} U_0 |\mathbf{f}(t)|+|\mathbf{M}\ddot{\mathbf{f}}(t)-\mathbf{C}\dot{\mathbf{f}}(t)+\mathbf{K}\mathbf{f}(t)||\mathbf{x}_p(t)|\, dt.
	\end{equation}
	Applying H\"older's inequality to equation \eqref{boundgrad1soft} and using the estimate \eqref{upperxLrsoft} gives
	\begin{equation}\label{boundgrad2soft}
	\begin{split}
	\|\mathbf{f}\|_{L^2(0,NT)}^2&\leq U_0\|\mathbf{f}\|_{L^1(0,NT)}+\|\mathbf{M}\ddot{\mathbf{f}}-\mathbf{C}\dot{\mathbf{f}}+\mathbf{K}\mathbf{f}\|_{L^{r*}(0,NT)}\|\mathbf{x}_p\|_{L^r(0,NT)}\\
	&\leq U_0\|\mathbf{f}\|_{L^1(0,NT)}+\|\mathbf{M}\ddot{\mathbf{f}}-\mathbf{C}\dot{\mathbf{f}}+\mathbf{K}\mathbf{f}\|_{L^{r*}(0,NT)}N^{\frac{1}{r}}y^*,
	\end{split}
	\end{equation}
	where $y^*$ is the unique positive root of the polynomial
	\begin{equation}
	P(y)=u_0y^{r-1}-K_{max}T^{\frac{r-2}{r}}y-\left\|\mathbf{f}\right\|_{L^{r*}(0,T)}.
	\end{equation}
	Since both sides in \eqref{boundgrad2soft} scale neutrally in $N$, we obtain a contradiction to assumption  \eqref{nonexsoft}, which proves the claim.
\end{proof}

\begin{remark}
Clearly, condition \eqref{AssUhard} and condition \eqref{boundgradU} cannot hold at the same time globally. Mechanical models, however, are generally only justified on some bounded domain around the position of rest, as higher displacements would violate the validity of the model. If condition \eqref{nonex} is satisfied on a domain on which the gradient of $U$ can be bounded as in \eqref{boundgradU}, we can infer, for large enough amplitude of the external forcing, that the validity of the model breaks down, as the periodic orbit is leaving the domain of validity.
\end{remark}

\begin{remark}
	From the upper bound \eqref{upperxdotL2} it immediately follows that
	\begin{equation}\label{Linfty}
	\|\mathbf{x}_p\|_{L^\infty(0,NT)}\leq |\mathbf{x}_0|+\frac{N\sqrt{T}}{C_{min}}\|\tilde{\mathbf{f}}\|_{L^2(0,T)},
	\end{equation}
	where $\mathbf{x}_0=\mathbf{x}_p(0)$. If we chose to use the estimate \eqref{Linfty} instead of, say, the upper bound \eqref{upperxLr} in the proof of Theorem \eqref{nonexthm}, one would obtain a non-existence criterion of the form
	\begin{equation}
	\|\mathbf{f}\|_{L^2(0,T)}^2>U_0\|\mathbf{f}\|_{L^1(0,T)}+\|\mathbf{M}\ddot{\mathbf{f}}-\mathbf{C}\dot{\mathbf{f}}+\mathbf{K}\mathbf{f}\|_{L^{1}(0,T)}\left(|\mathbf{x}_0|+\frac{N\sqrt{T}}{C_{min}}\|\tilde{\mathbf{f}}\|_{L^2(0,T)}\right),
	\end{equation} 
	which has a quadratic amplitude term on the right-hand side as well as on the left-hand side. Therefore, it is not guaranteed that \textit{any} periodic orbit will leave the domain of definition (where he gradient of the potential can be bounded accordingly). This shows that the $L^p$-estimates \eqref{upperxLr} and \eqref{upperxLrsoft} are crucial for our line of reasoning.
\end{remark}

\section{Examples}

\begin{example}\label{DuffMean}
	Consider the forced-damped Duffing oscillator with a hardening cubic stiffness,
	\begin{equation}\label{Duffinghard}
	\ddot{x}+c\dot{x}+kx=-\delta x^3+A\sin(n\omega t),
	\end{equation}
	for $c>0$ the linear damping coefficient, $k>0$ the linear stiffness coefficient, $\delta>0$ the cubic stiffness coefficient, $A>0$ the forcing magnitude, $\omega>0$ the fundamental forcing frequency and $n\in\mathbb{N}$ an oscillation parameter.
	The nonlinear potential associated to equation \eqref{Duffinghard} is given by
	\begin{equation}
	U(x)=\delta\frac{x^4}{4},
	\end{equation}
	which is depicted in Figure \ref{x^4}. We can choose the constants in \eqref{AssUhard} as 
	\begin{equation}\label{UDuffing}
	u_0=\delta, \quad r=4.
	\end{equation}
	As our domain of validity, we choose the unit interval $I=[0,1]$, where we can bound the derivative of the potential $U$ as 
	\begin{equation}
	|U'(x)|=\delta|x|^3\leq \delta.
	\end{equation}
	This implies that we can choose $U_0=\delta$. Consequently, condition \eqref{nonex} takes the form
	\begin{equation}\label{condDuff}
	\begin{split}
			\frac{\pi}{\omega}A^2>&\delta\|A\sin(n\omega t)\|_{L^1(0,T)}+\delta^{-\frac{1}{3}}\|A(k-n^2\omega^2)\sin(n\omega t)-Acn\omega\cos(n \omega t)\|_{L^{\frac{4}{3}}(0,T)}\\
			&\qquad\qquad\qquad\times\left\|A\sin(n \omega t)-A\frac{n\omega }{c}\cos(n\omega t)\right\|_{L^{\frac{4}{3}}(0,T)}^{\frac{1}{3}}.
		\end{split}	
	\end{equation}
Figure \ref{F} shows the behavior of the function
\begin{footnotesize}
\begin{equation}\label{defF}
	F(A):=\frac{\pi}{\omega}A^2-\delta\|\sin(n\omega t)\|_{L^1(0,T)}A-\delta^{-\frac{1}{3}}\|(k-n^2\omega^2)\sin(n\omega t)-cn\omega\cos(n \omega t)\|_{L^{\frac{4}{3}}(0,T)}\left\|\sin(n \omega t)-\frac{n\omega }{c}\cos(n\omega t)\right\|_{L^{\frac{4}{3}}(0,T)}^{\frac{1}{3}}A^{\frac{4}{3}},
\end{equation}
\end{footnotesize}
for different parameter values. A sign-change of $F$ implies that any periodic orbit leaves the domain of validity.\\
Figure \ref{Astar} shows the behavior of the unique positive root of the function \eqref{defF} in dependence on the damping $c$, the magnitude of the nonlinear potential $\delta$ and the linear stiffness $k$. All three plots indicate a large forcing amplitude for small parameter values, as $c$ and $k$ appear in the denominator of the solution for linear systems, cf. \eqref{linearFourier}, while small values of $\delta$ indicate weak nonlinearity. As the nonlinearp potential is hardening, higher values of $\delta$, i.e., stronger influence of the potential, require higher amplitudes for a periodic orbit to leave the domain of validity.

	\begin{figure}
		\begin{subfigure}{.5\textwidth}
			\centering
			\includegraphics[width=.8\linewidth]{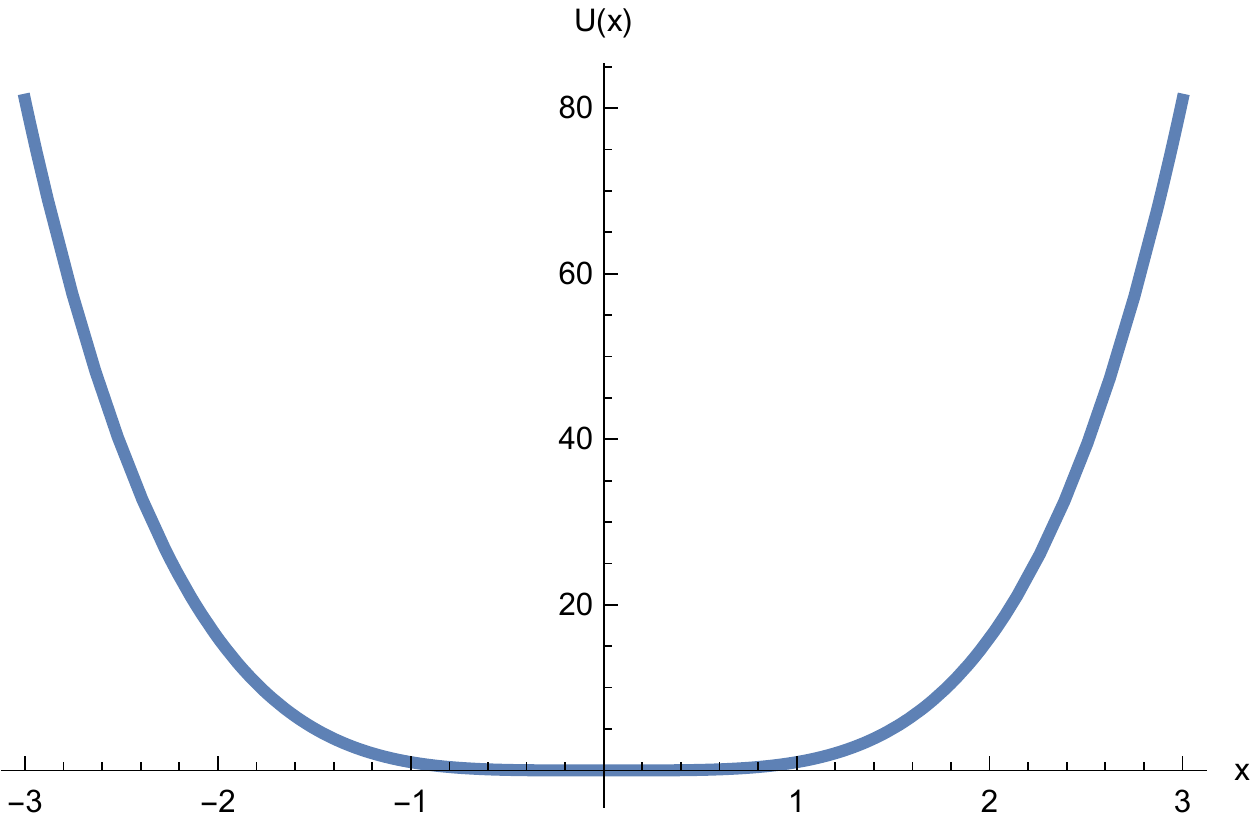}
			\caption{The nonlinear potential $U(x)=x^4$.}
			\label{x^4}
		\end{subfigure}%
		\begin{subfigure}{.5\textwidth}
			\centering
			\includegraphics[width=.7\linewidth]{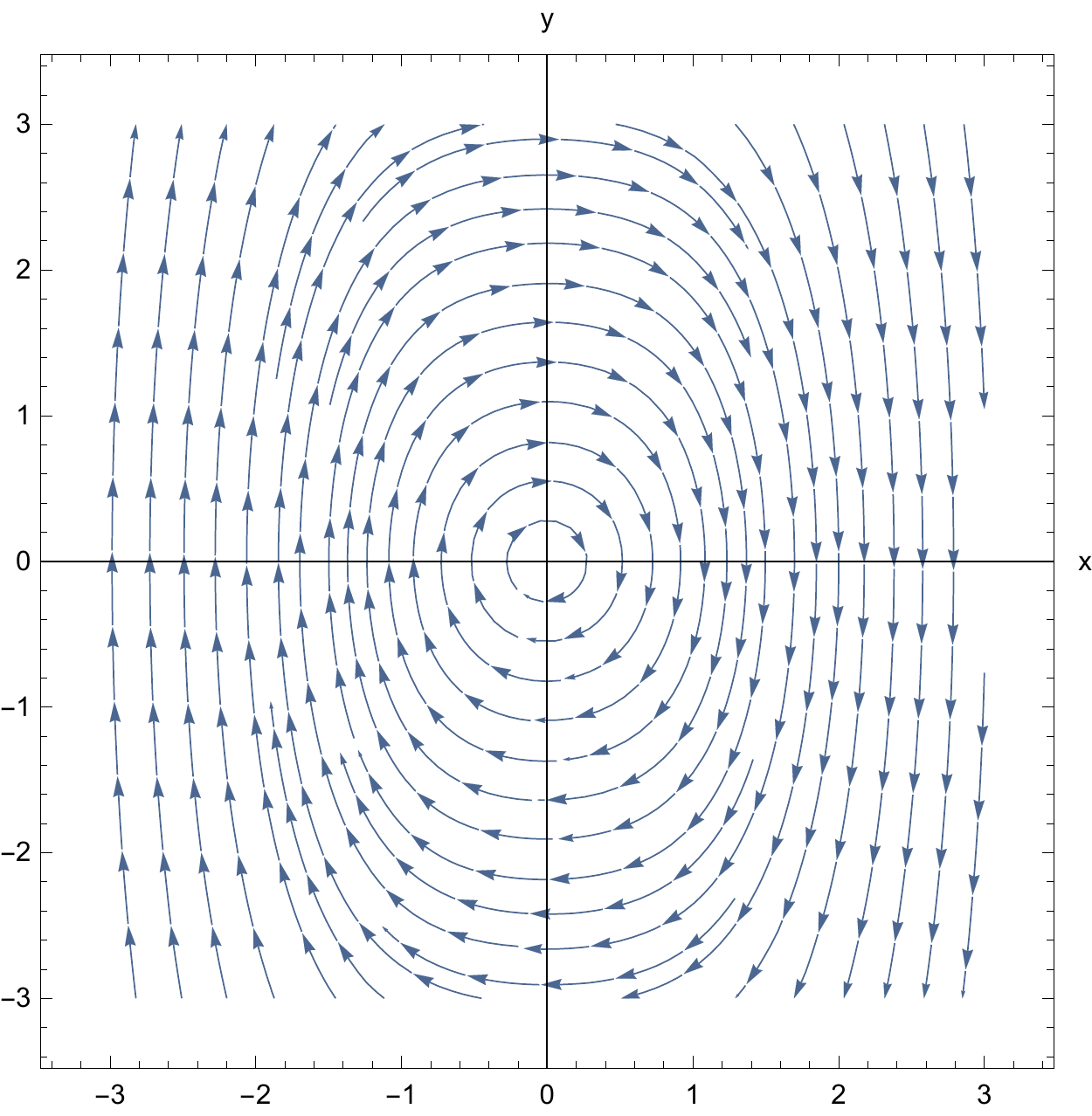}
			\caption{Dynamics of system \eqref{Duffinghard} around the origin for $c=A=0$.}
			\label{Dyn_Duff}
		\end{subfigure}
		\caption{The potential $U$ and the phase portrait of the unforced and undamped hardening Duffing oscillator.}
		\label{}
	\end{figure}

	\begin{figure}
	\begin{subfigure}{.5\textwidth}
		\centering
		\includegraphics[width=.8\linewidth]{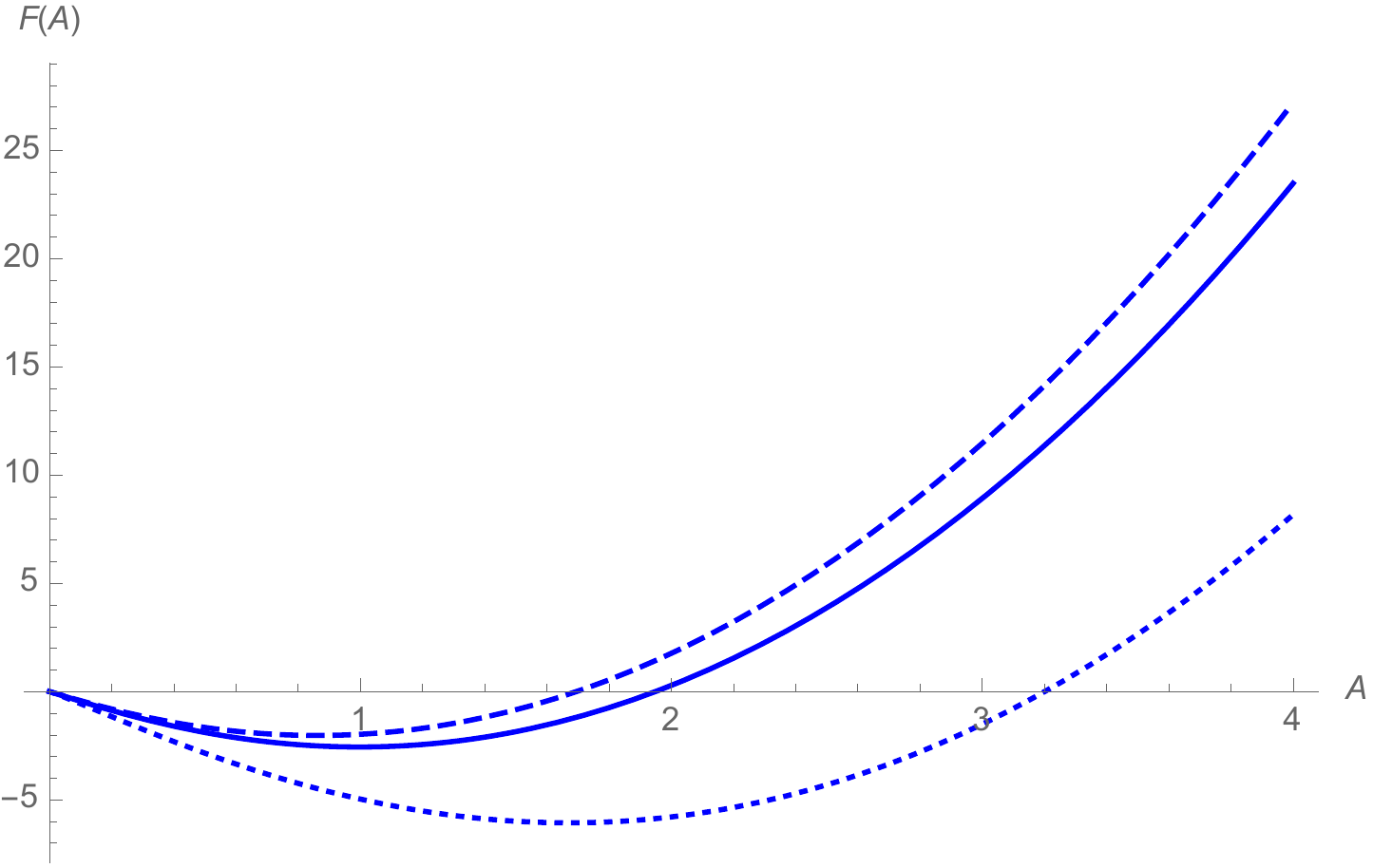}
		\caption{The function \eqref{defF} for $\omega = 1, k = 1.1, \delta = 1, n = 1$ and $c=0.01$ (solid), $c=0.1$ (dashed) and $c=1$ (dotted).}
		\label{Fc}
	\end{subfigure}%
	\begin{subfigure}{.5\textwidth}
		\centering
		\includegraphics[width=.7\linewidth]{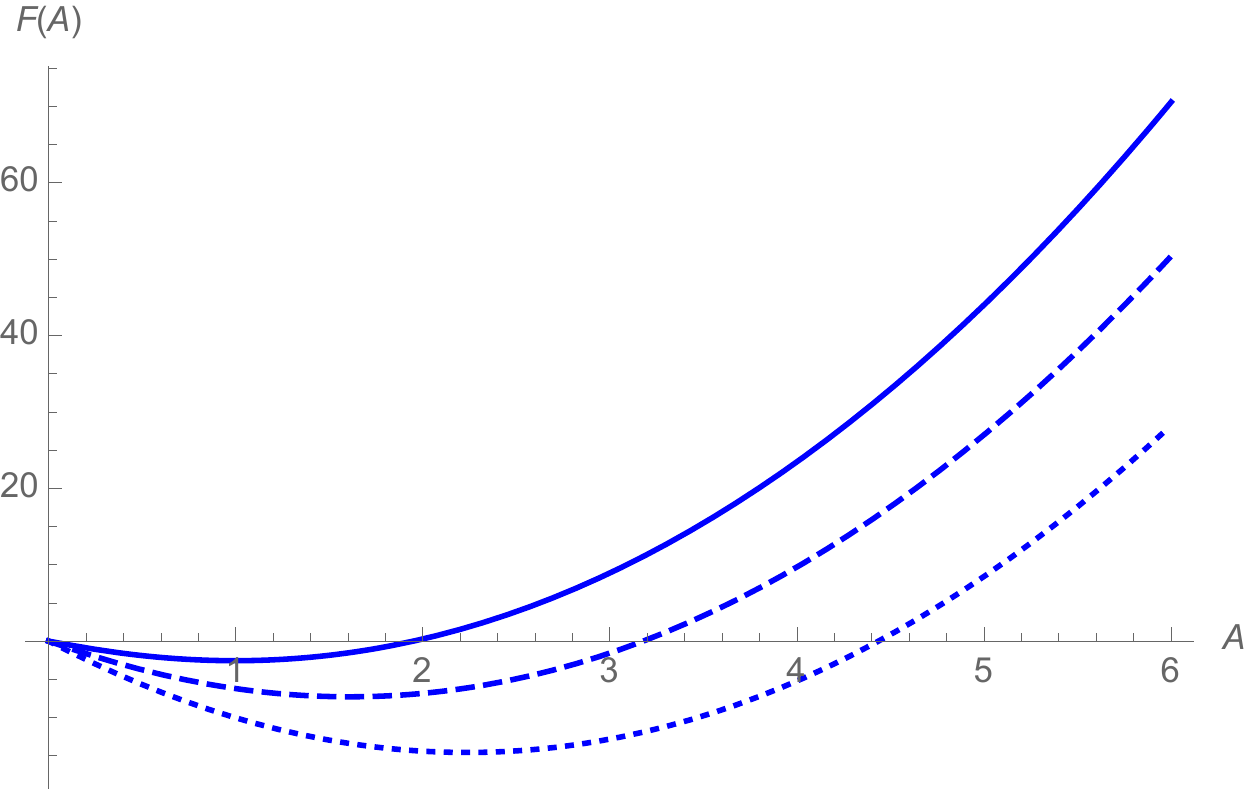}
		\caption{The function \eqref{defF} for $\omega = 1, k = 1.1, c=0.1, n = 1$ and $\delta=1$ (solid), $\delta=2$ (dashed) and $\delta=3$ (dotted).}
		\label{Fd}
	\end{subfigure}
	\centering
	\begin{subfigure}{.5\textwidth}
		\centering
		\includegraphics[width=.7\linewidth]{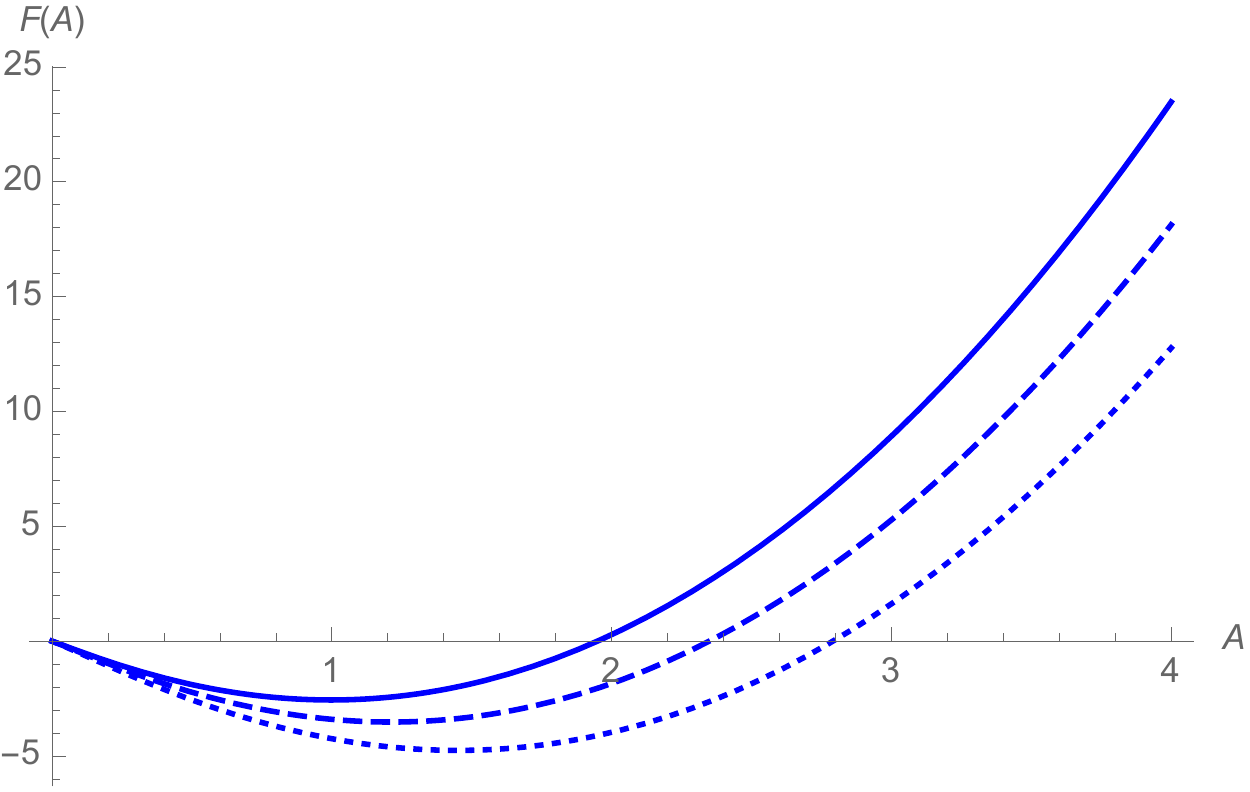}
		\caption{The function \eqref{defF} for $\omega = 1, c=0.1, \delta = 1, n = 1$ and $k=1.1$ (solid), $k=1.15$ (dashed) and $k=1.2$ (dotted).}
		\label{Fk}
	\end{subfigure}
	\caption{The behavior of the function \eqref{defF} for different parameter values of $c$, $\delta$ and $k$.}
	\label{F}
\end{figure}

	\begin{figure}
	\begin{subfigure}{.5\textwidth}
		\centering
		\includegraphics[width=.8\linewidth]{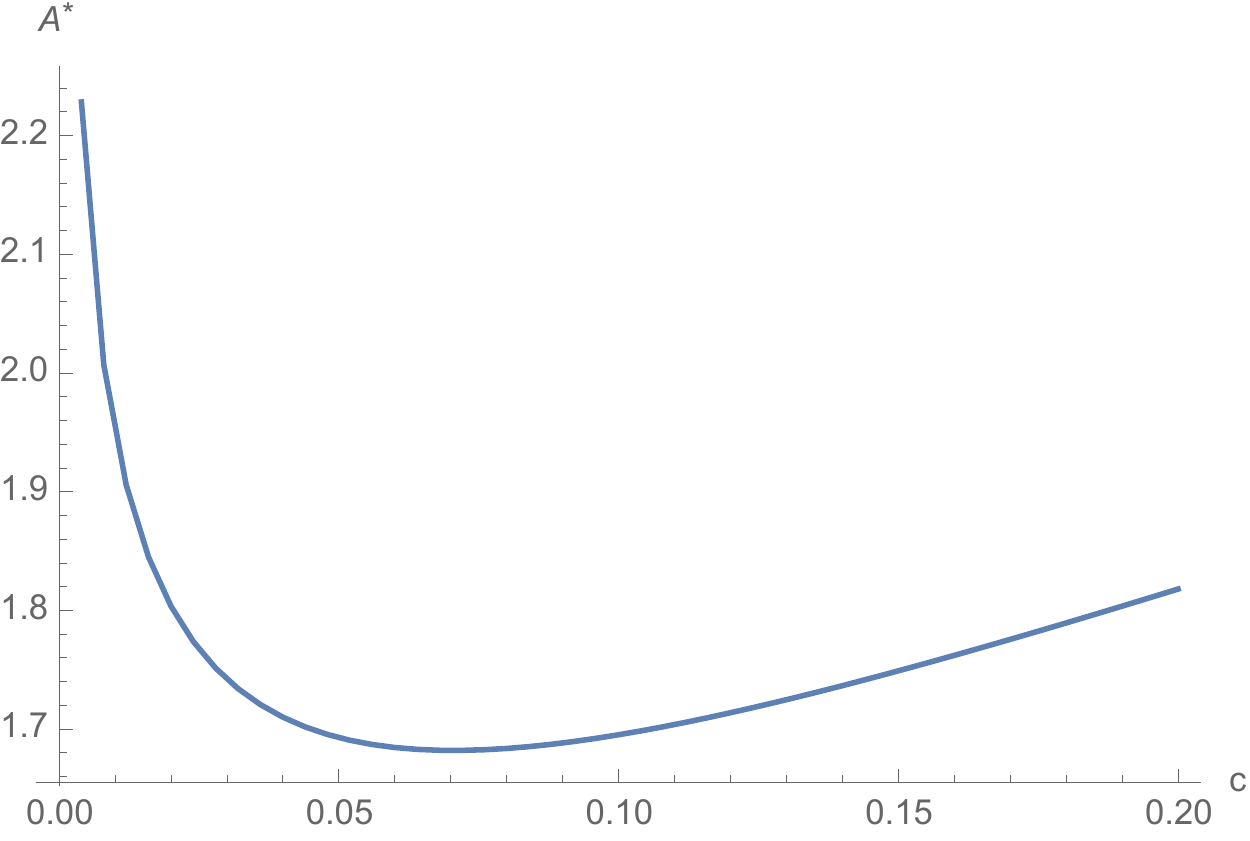}
		\caption{The unique positive root of the function \eqref{defF} for $\omega = 1, k = 1.1, \delta = 1, n = 1$ and $c\in \{0,0.2\}$}
		\label{Astarchard}
	\end{subfigure}%
	\begin{subfigure}{.5\textwidth}
		\centering
		\includegraphics[width=.7\linewidth]{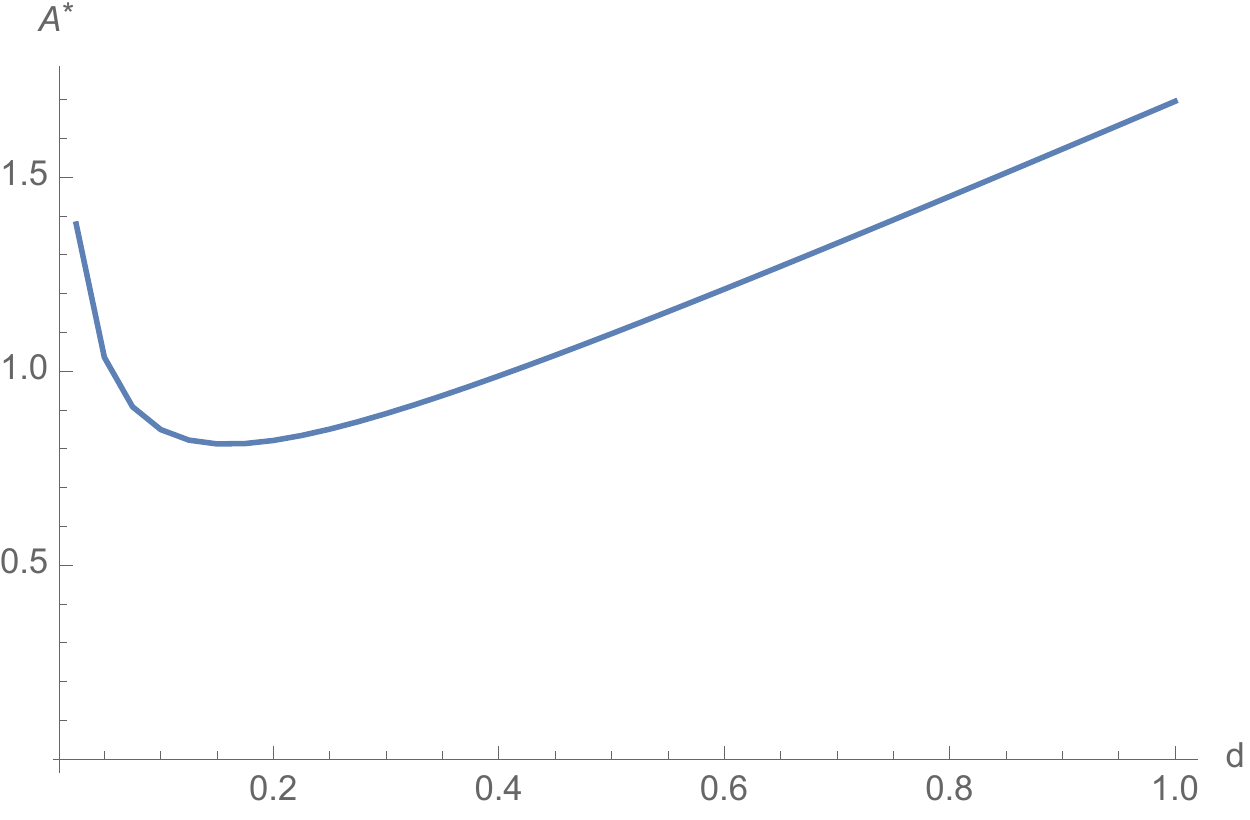}
		\caption{The unique positive root of the function \eqref{defF} for $\omega = 1, k = 1.1, c=0.1, n = 1$ and $\delta\in \{0,1\}$}
		\label{}
	\end{subfigure}
\centering
	\begin{subfigure}{.5\textwidth}
	\centering
	\includegraphics[width=.7\linewidth]{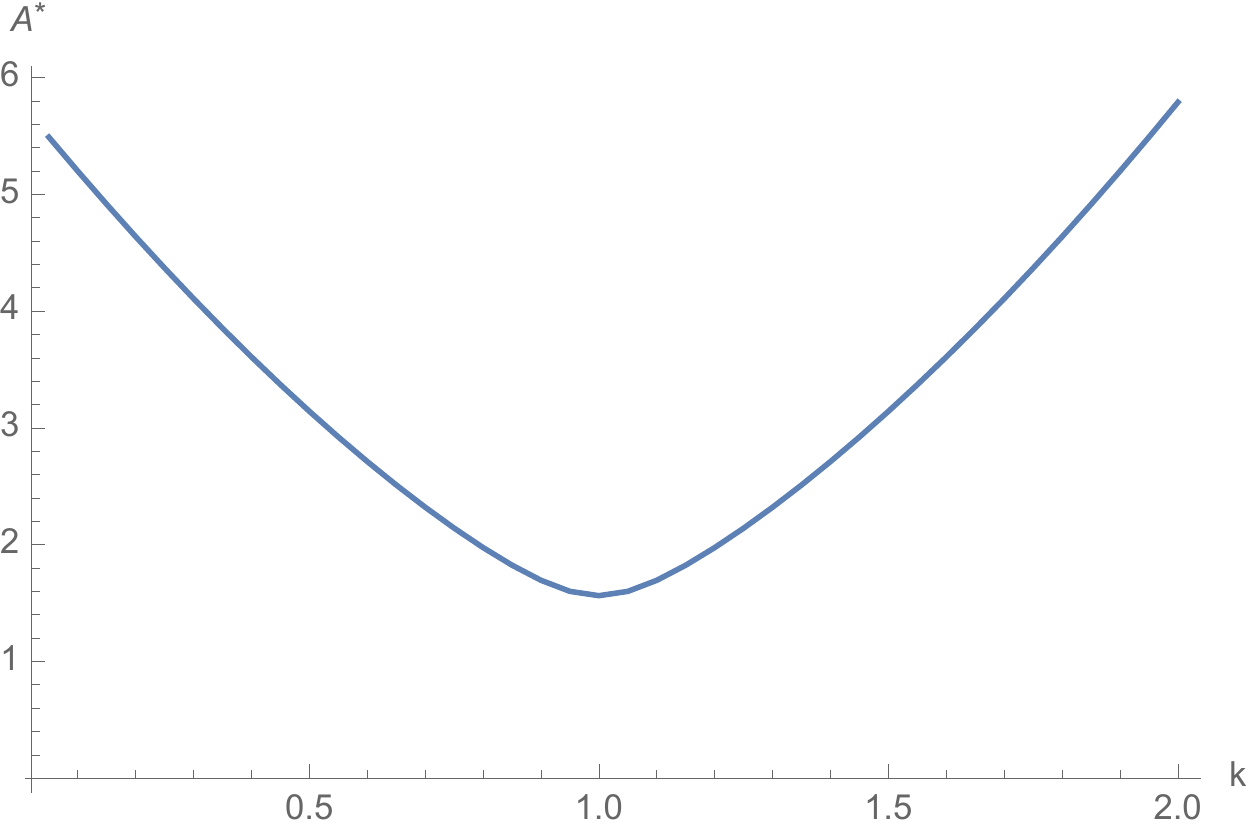}
	\caption{The unique positive root of the function \eqref{defF} for $\omega = 1, c=0.1, \delta = 1, n = 1$ and $k\in \{0,2\}$}
	\label{}
\end{subfigure}
	\caption{The behavior of the unique positive root of the equation $F(A)=0$, cf. \eqref{defF} in dependence on the damping $c$, the magnitude of the nonlinear potential $\delta$ and the linear stiffness $k$.}
	\label{Astar}
\end{figure}
\end{example}

\begin{example}\label{Duffingsoftexpl}
Consider the forced-damped Duffing oscillator with a softening cubic stiffness,
\begin{equation}\label{Duffingsoft}
\ddot{x}+c\dot{x}+kx=\delta x^3+A\sin(n\omega t),
\end{equation}
for $c>0$ the linear damping coefficient, $k>0$ the linear stiffness coefficient, $\delta>0$ the cubic stiffness coefficient, $A>0$ the forcing magnitude, $\omega>0$ the fundamental forcing frequency and $n\in\mathbb{N}$ an oscillation parameter. The nonlinear potential associated to equation \eqref{Duffingsoft} is given by
\begin{equation}
U(x)=-\delta\frac{x^4}{4},
\end{equation}
which is depicted in Figure \ref{x^2-x^4}., while Figure \ref{Dyn_Duff_soft} shows the phase portrait of the unforced and undamped system, i.e., system \eqref{Duffingsoft} with $c=A=0$. We can choose the constants in \eqref{AssUsoft} as 
\begin{equation}\label{UDuffingsoft}
u_0=\delta, \quad r=4.
\end{equation}
As our domain of validity, we choose the unit interval $I=[0,1]$, where we can bound the derivative of the potential $U$ as 
\begin{equation}
|U'(x)|=\delta|x|^3\leq \delta.
\end{equation}
This implies that we can choose $U_0=\delta$. Consequently, condition \eqref{nonexsoft} takes the form
\begin{equation}\label{condDuffsoft}
\begin{split}
\frac{\pi}{\omega}A^2>&\delta\|A\sin(n\omega t)\|_{L^1(0,T)}+\|A(k-n^2\omega^2)\sin(n\omega t)-Acn\omega\cos(n \omega t)\|_{L^{\frac{4}{3}}(0,T)}y^*,
\end{split}	
\end{equation}
where $y^*$ is the unique positive root of the polynomial
\begin{equation}
P(y)=\delta y^3-k\sqrt{T} y-A\|\sin(n\omega t)\|_{L^{\frac{4}{3}}(0,T)}.
\end{equation}
Consider again the function
\begin{equation}\label{defFsoft}
F(A):=\frac{\pi}{\omega}A^2-\delta\|A\sin(n\omega t)\|_{L^1(0,T)}-\|A(k-n^2\omega^2)\sin(n\omega t)-Acn\omega\cos(n \omega t)\|_{L^{\frac{4}{3}}(0,T)}y^*.
\end{equation}
A sign change in \eqref{defFsoft} indicates that any (potential) periodic solution leaves the domain of validity. The dependence of the unique positive root $A^*$ of \eqref{defFsoft} on different parameters is depicted in Figure \ref{Astarsoftfig}. Similar to the Duffing oscillator with hardening stiffness \eqref{Duffinghard}, the critical amplitude $A^*$ grows for larger values of $c$ and $k$, cf. Figure \ref{Astarsoftc} and Figure \ref{Astarsoftk}. On the other hand, the critical amplitude decreases for larger values of $\delta$, cf. Figure \ref{Astarsoftd}.

\begin{figure}
	\begin{subfigure}{.5\textwidth}
		\centering
		\includegraphics[width=.8\linewidth]{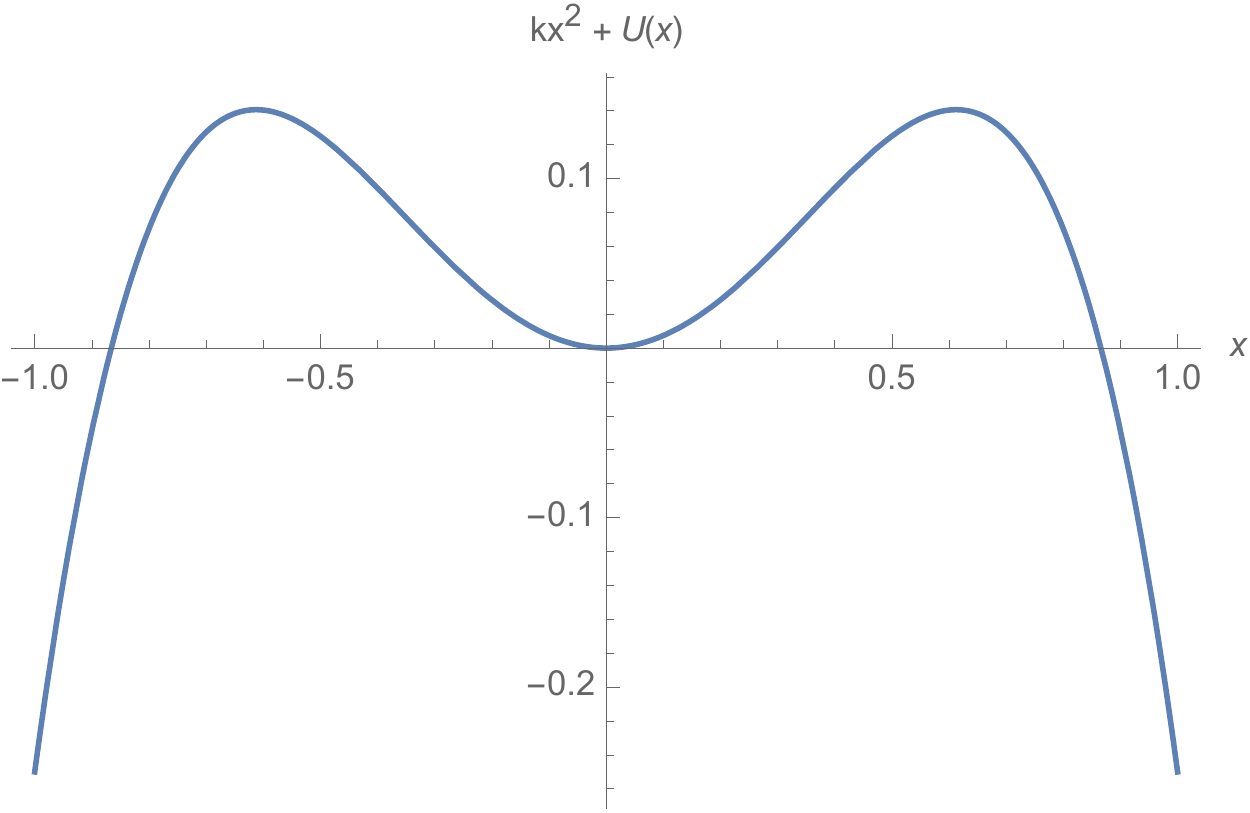}
		\caption{Linear plus nonlinear potential, $kx^2+U(x)=0.75x^2-x^4$}
		\label{x^2-x^4}
	\end{subfigure}%
	\begin{subfigure}{.5\textwidth}
		\centering
		\includegraphics[width=.7\linewidth]{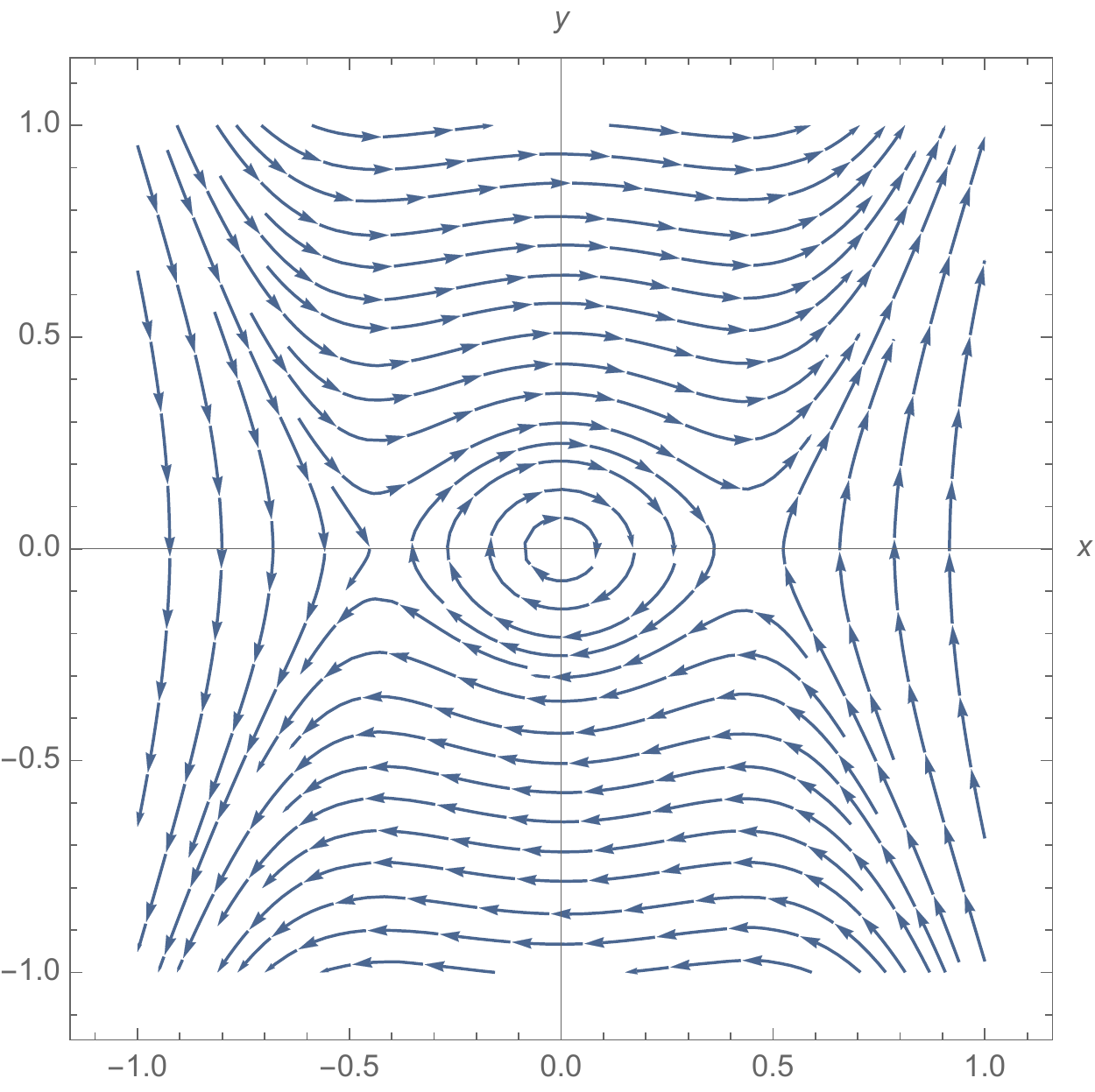}
		\caption{Dynamics of system \eqref{Duffingsoft} around the origin for $c=A=0$.}
		\label{Dyn_Duff_soft}
	\end{subfigure}
	\caption{The potential $U$ and the phase portrait of the unforced and undamped hardening Duffing oscillator.}
	\label{}
\end{figure}

	\begin{figure}
	\begin{subfigure}{.5\textwidth}
		\centering
		\includegraphics[width=.8\linewidth]{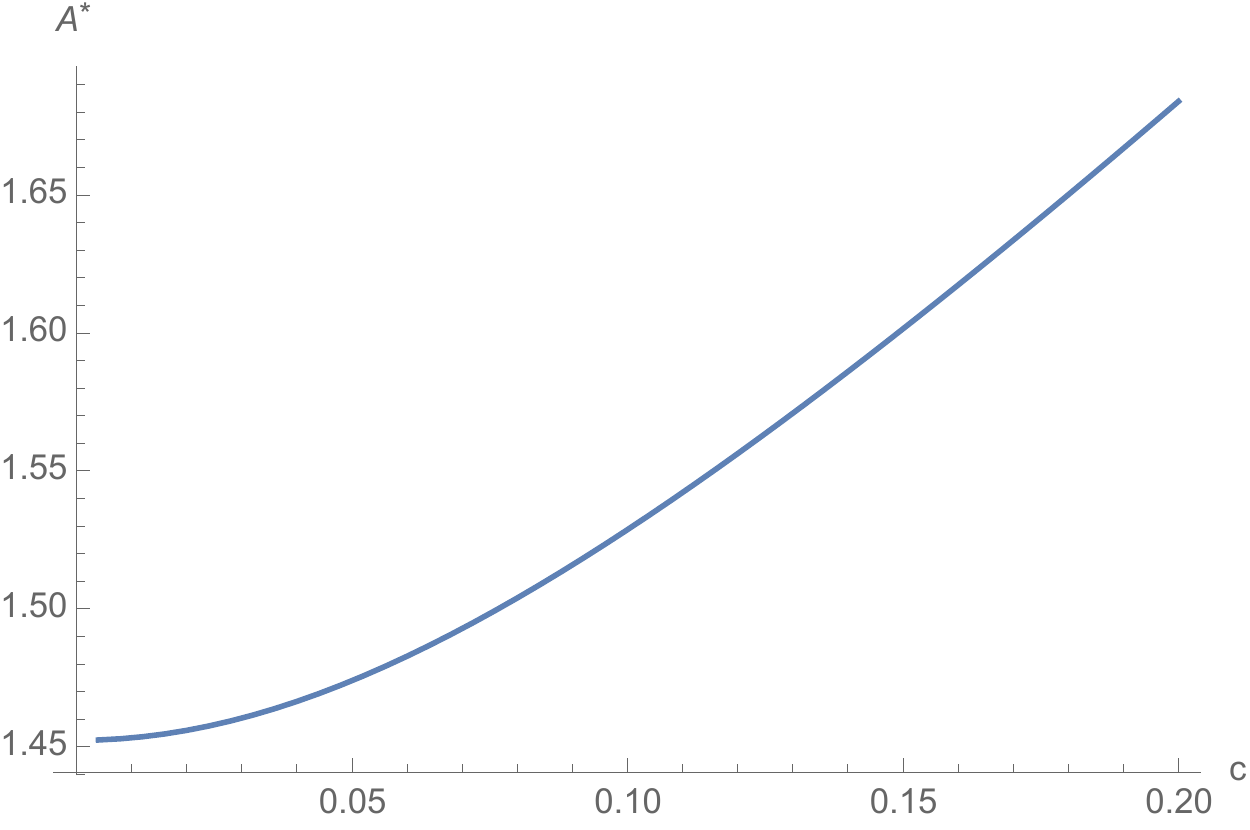}
		\caption{The unique positive root of the function \eqref{defFsoft} for $\omega = 1, k = 1.1, d = 1, n = 1$ and $c\in \{0,0.2\}$}
		\label{Astarsoftc}
	\end{subfigure}%
	\begin{subfigure}{.5\textwidth}
		\centering
		\includegraphics[width=.7\linewidth]{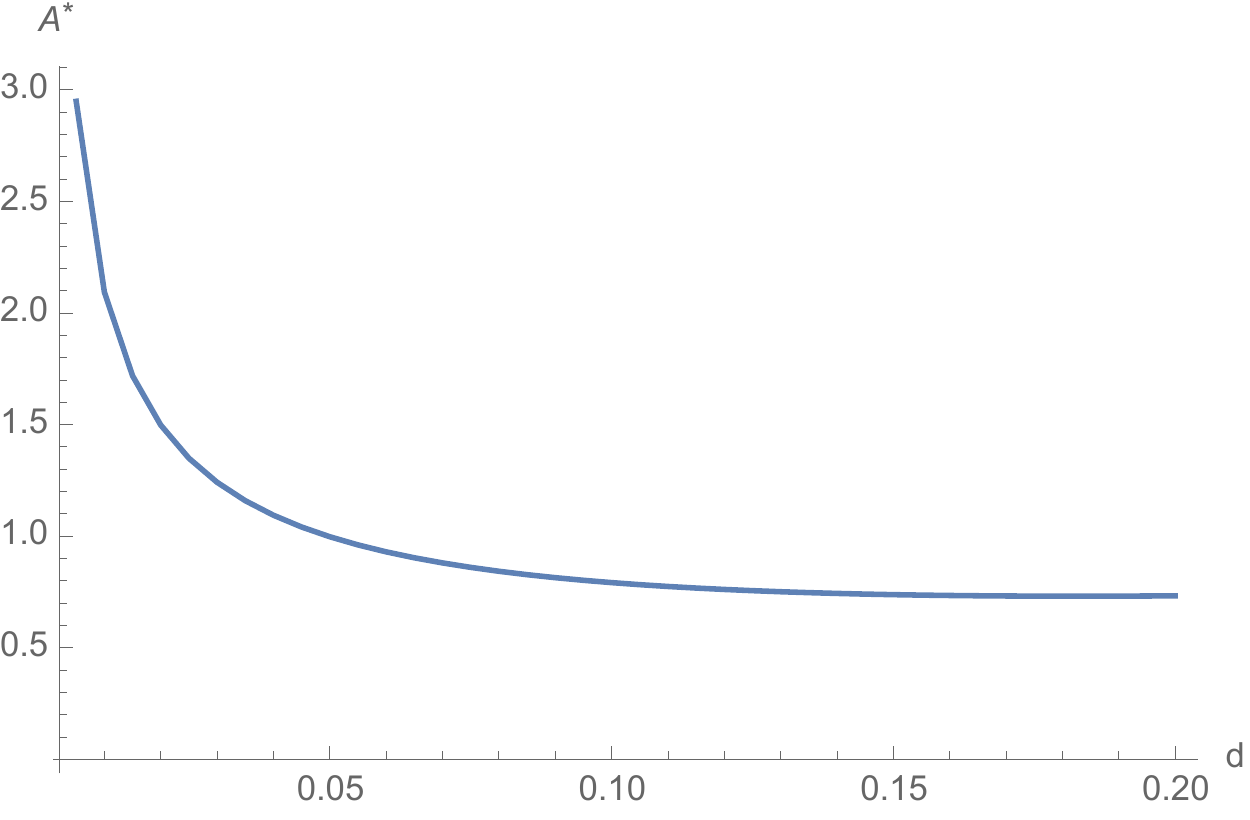}
		\caption{The unique positive root of the function \eqref{defFsoft} for $\omega = 1, k = 1.1, c=0.1, n = 1$ and $d\in \{0,1\}$}
		\label{Astarsoftd}
	\end{subfigure}
	\centering
	\begin{subfigure}{.5\textwidth}
		\centering
		\includegraphics[width=.7\linewidth]{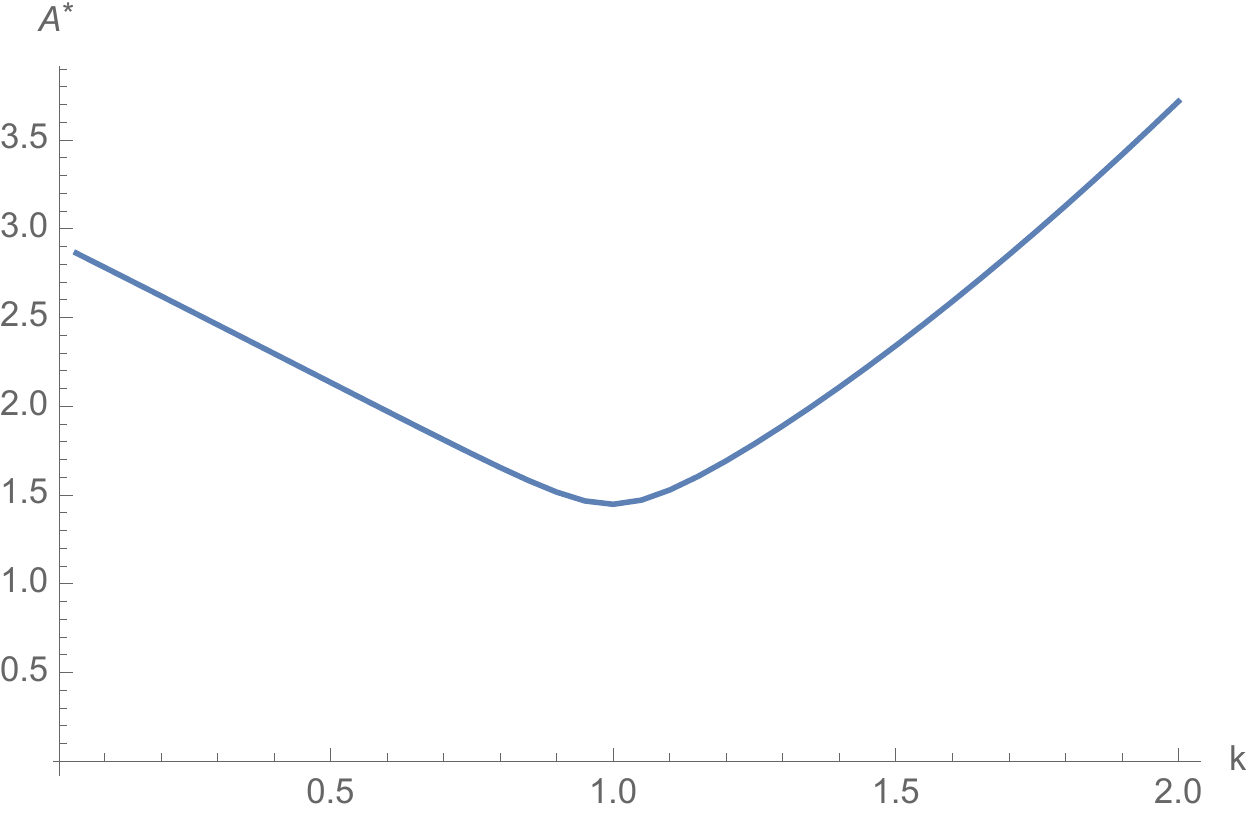}
		\caption{The unique positive root of the function \eqref{defFsoft} for $\omega = 1, c=0.1, d = 1, n = 1$ and $k\in \{0,2\}$}
		\label{Astarsoftk}
	\end{subfigure}
	\caption{The behavior of the unique positive root of the equation $F(A)=0$, cf. \eqref{defFsoft} in dependence on the damping $c$, the magnitude of the nonlinear potential $\delta$ and the linear stiffness $k$.}
	\label{Astarsoftfig}
\end{figure}
\end{example}

\section{Discussion and Further Perspectives}

We derived $L^p$-estimates for periodic solutions of forced-damped mechanical systems with nonlienarly hardening and nonlinearly softening potentials. Thanks to polynomial bounds on the nonlinear part of the potential, these estimates do not scale neutrally in the amplitude of the external forcing. Assuming an upper bound on the gradient of the potential on a bounded domain, we can show that, for sufficiently high forcing amplitude, \textit{any} periodic orbit will the given domain (at some time).\\

For higher dimensional systems, the mechanical potential is usually not perfectly symmetric with respect to its coordinates, which implies that conditions \eqref{AssUhard} and \eqref{AssUsoft} will not be satisfied in these cases. I would be interesting to extend the results of the present paper to systems with the aforementioned properties.\\
Also, the role of the frequency and the oscillation parameter - even for systems with one mechanical degree of freedom - is not completely clear. Specifically, we can ask: How does the maximum of any periodic orbit changes quantitatively with respect to the forcing amplitude?\\
The main result of the present paper can be summarized as a lower bound on the maximum of any periodic orbit (in a given domain). Can we also derive a lower bound on the minimum of a periodic orbit in a forced-damped system? This would imply a direct analogue of the Bendixson--Dulac criterion for forced-damped mechanical systems.\\

\textbf{Acknowledgments.}\\
The author would like to thank Thomas Breunung and George Haller for several useful comments and suggestions.

\section{Appendix: An estimate on the positive root of certain polynomials}
In this section, we prove an upper bound on the roots of certain polynomial equations, appearing in the estimates for nonlienarly softening potentials. They are of theoretical interest, as, for the analytical use of the estimates with nonlinearities greater than four, no explicit solution formula for the polynomial roots exists.

\begin{lemma}\label{polybound}
	Let $A$, $B$ and $C$ be positive real numbers and let $s\geq2$ be a real number. The unique positive root $y^*$ of the polynomial
	\begin{equation}
	P(y)=Ay^s-By-C,
	\end{equation}
	can be bounded from above as
	\begin{equation}\label{boundystar}
	y^*\leq \overline{y}+\sqrt{2\frac{|P(\overline{y})|}{P''(\overline{y})}},
	\end{equation}
	where
	\begin{equation}
	\overline{y}=\left(\frac{B}{sA}\right)^{\frac{1}{s-1}}.
	\end{equation}
\end{lemma}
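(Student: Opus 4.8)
The plan is to exploit that $P$ is convex on $(0,\infty)$ with a unique interior minimizer at $\overline{y}$, and then to bound $P$ from below by the osculating parabola at $\overline{y}$, whose relevant root is explicit. Concretely: since $P'(y)=sAy^{s-1}-B$ vanishes exactly at $y=\overline{y}=\bigl(\tfrac{B}{sA}\bigr)^{1/(s-1)}$ and $P''(y)=s(s-1)Ay^{s-2}>0$ for $y>0$, the point $\overline{y}$ is the unique critical point and the global minimum of $P$ on $(0,\infty)$. A one-line computation using $\overline{y}^{\,s-1}=\tfrac{B}{sA}$ gives $P(\overline{y})=-\tfrac{s-1}{s}B\overline{y}-C<0$, while $P(0)=-C<0$ and $P(y)\to+\infty$; hence $P$ has a unique positive root $y^*$, and necessarily $y^*>\overline{y}$.

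The crucial step uses the hypothesis $s\geq2$: then the exponent $s-2$ is non-negative, so $y\mapsto P''(y)=s(s-1)Ay^{s-2}$ is non-decreasing on $(0,\infty)$, and in particular $P''(\xi)\geq P''(\overline{y})$ for every $\xi\geq\overline{y}$. Applying Taylor's theorem about $\overline{y}$ together with $P'(\overline{y})=0$, for each $y\geq\overline{y}$ there is $\xi\in[\overline{y},y]$ with
\[
P(y)=P(\overline{y})+\tfrac12 P''(\xi)(y-\overline{y})^2\geq P(\overline{y})+\tfrac12 P''(\overline{y})(y-\overline{y})^2 .
\]
Thus $P$ lies above the parabola $Q(y)=P(\overline{y})+\tfrac12 P''(\overline{y})(y-\overline{y})^2$ on $[\overline{y},\infty)$. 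Evaluating at $y=y^*\geq\overline{y}$ and using $P(y^*)=0$ yields $0\geq P(\overline{y})+\tfrac12 P''(\overline{y})(y^*-\overline{y})^2$, which, since $P(\overline{y})<0$ and $P''(\overline{y})>0$, rearranges to $(y^*-\overline{y})^2\leq 2|P(\overline{y})|/P''(\overline{y})$; taking the positive square root gives \eqref{boundystar}.

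There is no substantial obstacle; the only points requiring care are (i) verifying the sign $P(\overline{y})<0$, so that $y^*$ genuinely exists and exceeds $\overline{y}$ and the right-hand side of \eqref{boundystar} is well defined, and (ii) the essential use of $s\geq2$ to get monotonicity of $P''$ — for $s<2$ the osculating parabola would over-estimate $P$ just above $\overline{y}$, the inequality $P\geq Q$ would reverse, and the resulting estimate would bound $y^*$ from below rather than above.
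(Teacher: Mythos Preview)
Your proof is correct and follows essentially the same route as the paper: both identify $\overline{y}$ as the unique positive critical point and global minimum, verify $P(\overline{y})<0$, use $s\geq 2$ to ensure $P''$ is non-decreasing, and then bound $P$ from below on $[\overline{y},\infty)$ by the parabola $P(\overline{y})+\tfrac12 P''(\overline{y})(y-\overline{y})^2$. The only cosmetic difference is that the paper derives the parabolic lower bound via the double-integral representation $P(y)=P(\overline{y})+\int_{\overline{y}}^{y}\int_{\overline{y}}^{\eta}P''(\xi)\,d\xi\,d\eta$, whereas you invoke Taylor's theorem with Lagrange remainder; these are equivalent.
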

\begin{proof}
	First, we note that the derivative of the polynomial $P$,
	\begin{equation}
	P'(y)=sAy^{s-1}-B,
	\end{equation}
	has a unique positive zero, which we denote as
	\begin{equation}\label{zerop'}
	\overline{y}=\left(\frac{B}{sA}\right)^{\frac{1}{s-1}}.
	\end{equation}
	In particular,
	\begin{equation}\label{poverliney}
	\begin{split}
	P(\overline{y})&=A\left(\frac{B}{sA}\right)^{\frac{s}{s-1}}-B\left(\frac{B}{sA}\right)^{\frac{1}{s-1}}-C=\frac{(AB)^{\frac{s}{s-1}}}{(As)^{\frac{s+1}{s-1}}}\left(s^{\frac{1}{s-1}}-s^{\frac{s}{s-1}}\right)-C\\&<0,
	\end{split}
	\end{equation}
	which follows from $s^{s-1}>1$, by assumption.\\
	Since $y\mapsto P(y)$ is convex for $y>0$,
	\begin{equation}
	P''(y)=s(s-1)Ay^{s-2}> 0,
	\end{equation}
	by the assumption $s\geq 2$, it follows that, indeed, $P$ has a unique positive solution $y^*$ with $\overline{y}\leq y^*$.\\
	By \eqref{zerop'}, we can write
	\begin{equation}\label{intp}
	P(y)=\int_{\overline{y}}^{y}\int_{\overline{y}}^{\eta}p''(\xi)\, d\xi\, d\eta + P(\overline{y}).
	\end{equation}
	Since, again by the assumption that $s\geq2$, the function $y\mapsto P''(y)$ is monotonically increasing,
	\begin{equation}
	P'''(y)=s(s-1)(s-2)y^{s-3}\geq 0,
	\end{equation}
	for $y>0$, and it follows that $\inf_{\xi\in[\overline{y},\eta]}P''(\xi)=P''(\overline{y})$, for any $\eta\geq\overline{y}$. Therefore, we can bound \eqref{intp} as
	\begin{equation}\label{lowerp}
	\begin{split}
	P(y)&=\int_{\overline{y}}^{y}\int_{\overline{y}}^{\eta}P''(\xi)\, d\xi\, d\eta + P(\overline{y})\\&\geq \int_{\overline{y}}^{y}\int_{\overline{y}}^{\eta}P''(\overline{y})\, d\xi\, d\eta + P(\overline{y})=P''(\overline{y})\int_{\overline{y}}^{y}(\eta-\overline{y})\, d\eta + P(\overline{y})\\&=\frac{P''(\overline{y})}{2}y^2-\overline{y}P''(\overline{y})y+\frac{P''(\overline{y})\overline{y}^2}{2}+P(\overline{y}),
	\end{split}
	\end{equation}
	for $y\geq\overline{y}$.\\
	In particular, the unique positive zero of $P$ can be bounded from above by the positive root of the right-hand side in \eqref{lowerp}, i.e., by
	\begin{equation}
	y^{+}=\overline{y}+\sqrt{2\frac{|P(\overline{y})|}{P''(\overline{y})}},
	\end{equation}
	where we have used \eqref{poverliney}.
	This proves the claim.
\end{proof}

\begin{remark}The quadratic function $y\mapsto \frac{P''(\overline{y})}{2}y^2-\overline{y}p''(\overline{y})y+\frac{P''(\overline{y})\overline{y}^2}{2}+P(\overline{y})$ in \eqref{lowerp} defines a parabola with crest at $\overline{y}$. Since the global minimum of $P$ is attained at $\overline{y}$ as well and since the growth rate of $P$ is greater or equal than the growth rate of the parabola by the assumption $p\geq 2$, we can, indeed, bound the zero of $P$ from above by the positive zero of the parabola, c.f. Figure \ref{fig1}.\\We note that - thanks to the special structure of the polynomial, the bound \eqref{boundystar} improves classical, general a-priori bounds on the zeros of polynomials. We compare \eqref{boundystar} e.g. with the Lagrange bound \cite{lagrange1806traite},\begin{equation}\label{Lagrange}y^*\leq \max\left\{1,\sum_{k=0}^{n-1}{\left|\frac{P_k}{P_n}\right|}\right\},\end{equation}for the polynomial $P(y)=\sum_{k=1}^{n}P_ky^k$. For the polynomial $P(y)=y^5-y-1$, for which $y^*=1.1673$, we find that \eqref{Lagrange} predicts $y^*\leq 2$, while the parabolic bound  \eqref{boundystar} predicts $y^*\leq 1.38516$.\end{remark}\begin{figure}[ht]	\centering	\includegraphics[width=0.8\textwidth]{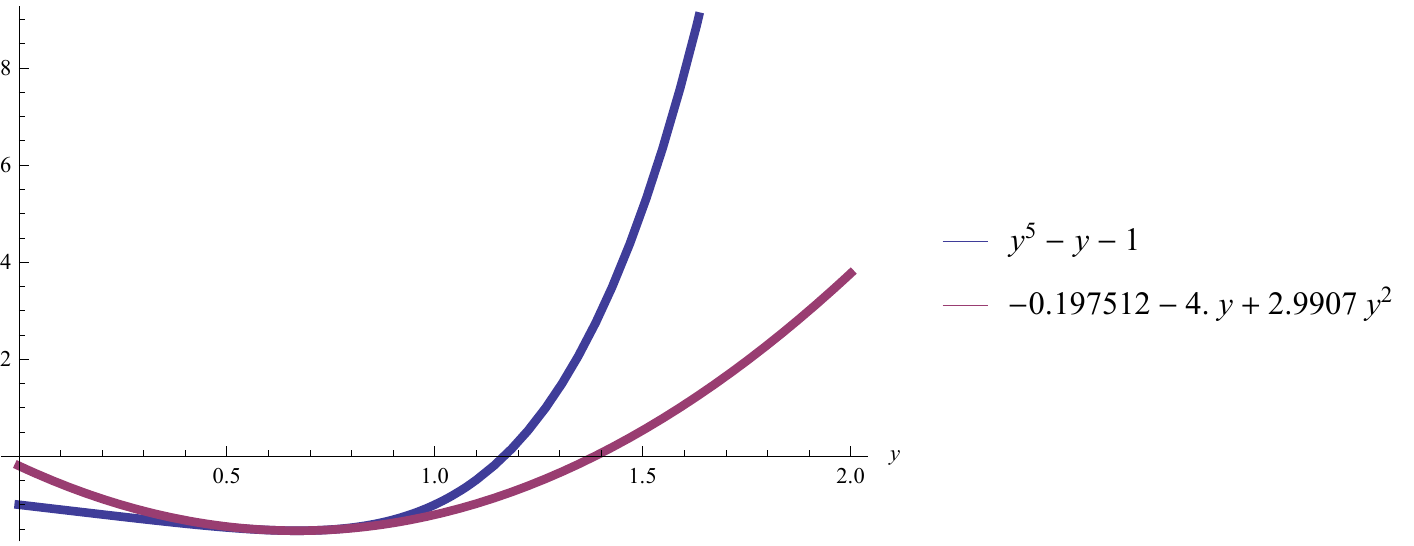}	\caption{The polynomial functions $P(y)=y^5-y-1$ and the parabola \eqref{lowerp} with $\overline{y}=0.66874$. The positive zero of the parabola is attained at $1.38516$, while the unique positive zero of $P$ is attained at $1.1673$.}	\label{fig1}\end{figure}

\bibliographystyle{abbrv}
\bibliography{/Users/floriankogelbauer/Dropbox/Bibs/DynamicalSystems.bib}
\end{document}